\tikzset{>={Stealth[scale=1.5, bend]}}
\theoremstyle{plain}
\newtheorem{theorem}{Theorem}
\newtheorem{prop}{Proposition}
\newtheorem{lemma}{Lemma}
\newtheorem{rk}{Remark}
\theoremstyle{definition}
\newcommand\com[1]{}
\newcommand\f{\varphi}
\newcommand\g{{\gamma}}
\newcommand\op[1]{\mathop{\rm #1}\nolimits}
\newcommand\p{\partial}
\newcommand\R{{\mathbb R}}
\newcommand\ff{{\boldsymbol f}}
\newcommand\bh{{\boldsymbol h}}
\newcommand\bx{{\boldsymbol x}}
\newcommand\by{{\boldsymbol y}}
    \def\@@and{}
\begin{document}

\title{Variationality of Conformal Geodesics in dimension 3}

\author{Boris Kruglikov}
\address{Department of Mathematics and Statistics, UiT the Arctic University of Norway, Troms\o\ 9037, Norway.
\ E-mail: {\tt boris.kruglikov@uit.no}. }

\author{Vladimir S.\ Matveev}
\address{Institute of Mathematics, Friedrich-Schiller-Universität, 07737 Jena, Germany.\newline
\ E-mail: {\tt vladimir.matveev@uni-jena.de}.}

\author{Wijnand Steneker}
\address{Department of Mathematics and Statistics, UiT the Arctic University of Norway, Troms\o\ 9037, Norway.
\ E-mail: {\tt wijnand.s.steneker@uit.no}. }

 \begin{abstract}
Conformal geodesics form an invariantly defined family of unparametrized curves in a conformal manifold
generalizing unparametrized geodesics/paths of projective connections. The equation describing them is of third order, 
and it was an open problem whether they are given by an Euler--Lagrange equation. 
In dimension 3 (the simplest, but most important from the viewpoint of physical applications) we demonstrate that
the equation for unparametrized conformal geodesics is variational.
 \end{abstract}

\maketitle


Conformal geodesics naturally arise in general relativity as a tool in studying conformal infinities \cite{FS,AK}. 
Variationality for classical geodesics is well known, and is a fundamental tool in their study.
Therefore it is important to understand and develop variationality for conformal geodesics.

\section{Conformal geodesics}\label{S1}

Let $g$ be a pseudo-Riemannian metric on a manifold $M$, $\nabla$ its Levi-Civita connection,
$\op{Ric}_g$ the Ricci tensor with traceless part $\op{Ric}_g^0$ and trace $R_g$ (scalar curvature).

Conformal geodesics of the conformal structure $[g]$ are given by the following differential equation \cite{Y,S,FS,GST}
 \begin{equation}\label{BE0}
\nabla_ua=\frac{3g(u,a)}{|u|^2}a- \frac{3|a|^2}{2|u|^2}u +|u|^2P^\sharp u-2P(u,u)u,
 \end{equation}
where $u=\dot{x}$ is the velocity field along non-null curve $x:I\to M$, $a=\nabla_uu$ is the acceleration,
the norms are defined as $|u|^2=g(u,u)$, the scalar product as $v\cdot w=g(v,w)$,
and $P$ is the Schouten tensor
 $$ 
P=\frac1{n-2}\Bigl(\op{Ric}_g-\frac1{2(n-1)}R_g\cdot g\Bigr)
=\frac1{n-2}\op{Ric}^0_g+\frac1{2n(n-1)}R_g\cdot g.
 $$

Following Bailey--Eastwood \cite{BE} we split \eqref{BE0} into its tangential part
 \begin{equation}\label{BE1}
u\cdot\nabla_ua=3\frac{(u\cdot a)^2}{|u|^2}-\frac32|a|^2-|u|^2P(u,u)
 \end{equation}
and the normal part
  \begin{equation}\label{BE2}
u\times \nabla_ua=3\frac{u\cdot a}{|u|^2}u\times a-|u|^2P^\sharp u\times u.
 \end{equation}
(One may understand $\times$ as the wedge operation, but in 3D it can be identified via Hodge star of $g$
and musical isomorphisms with the operation of vector product on $TM$.)
 
Let us summarize the following important properties of conformal geodesics \cite{S,BE,T}:

1$^\circ.$ 
Equation \eqref{BE0} is covariant, i.e.\ it does not depend on a choice of coordinates on $M$, and so transforms naturally under 
diffeomorphisms. Next, we observe that a change of a representative $g$ of the conformal structure $[g]$ leads to the same differential equation. Consequently, \eqref{BE0} is conformally invariant: if $\phi:(M,g)\to(\bar{M},\bar{g})$ is a conformal diffeomorphism, then it transforms the equation for conformal geodesics of $g$ to that of $\bar{g}$. 
Equivalently, $\phi$ maps conformal geodesics of $g$ to those of $\bar{g}$. 

$2^\circ.$ 
A curve is a conformal geodesic if and only if there exists a metric $g\in[g]$ such that the curve is an 
affinely parametrized non-null geodesic and $P^\sharp u=0$.
Note that for a generic representative $g$ of the conformal structure $[g]$ the Schouten tensor is nondegenerate, and so 
the equation $P^\sharp u=0$ has no nonzero solutions, but changing
$g\to\bar{g}=e^{2f}g$ results in $P\mapsto\bar{P}=P-\op{Hess}(f)+df\otimes df-\frac12|df^2|g$, $\op{Hess}(f)=\nabla df$,
and we can impose the nonlinear Monge-Ampere equation $\op{det}(\bar{P})=0$.

$3^\circ.$
Each solution of \eqref{BE0} enters with a canonical projective parameter, i.e.\ a 3-dimensional space of parametrizations
related by M\"obius transformations, and such reparametrizations do not change the equations.
Arbitrary change of parameter $t\mapsto h(t)$ along the curve changes \eqref{BE1} by a multiple of the Schwarzian derivative 
of $h$ and keeps \eqref{BE2} invariant. In fact, choosing a proper parameter one can always satisfy \eqref{BE1}, so the 
unparametrized version of the conformal geodesic equation is \eqref{BE2}. 

\medskip 

A constrained version of the parametrized conformal geodesic equation is based on normalization 
of the length parameter $|u|^2=1$ (depending on a metric $g$ in the conformal class;
in pseudo-Riemannian case this should be changed to $\pm1$), 
which implies $u\cdot a=0$ and furthermore $u\cdot\nabla_ua+|a|^2=0$.
Combining this with \eqref{BE1} yields $P(u,u)=-\frac12|a|^2$ and hence \eqref{BE0} with such constraints can be written as
 \begin{equation}\label{BE3}
\nabla_ua=-\frac12|a|^2u+P^\sharp u.
 \end{equation}

Variationality of the equations for conformal geodesics was discussed in the literature \cite{BE,SZ,DK} but
these results do not give an answer to the classical question whether the equations of conformal geodesics
are precisely the Euler--Lagrange (EL) equations of some Lagrangian, which we would like to address here. 

In what follows we restrict to the case $n=\dim M=3$ and we can assume without loss of generality that $|u|^2>0$.
As discussed above, and as is customary for projective structures, it is natural to consider unparametrized conformal geodesics. 
Our main result is
 \begin{theorem}\label{Thm}
The equation for unparametrized conformal geodesics is variational with Lagrangian 
$L=V/A^2$, where $V=\op{Vol}_g(u,a,\nabla_ua)$ and $A=\op{Area}_g(u,a)$; see \eqref{LL}.
The corresponding action is $\int L\,dt=\int\tau\,ds$, where $\tau$ is the torsion of the curve and $s$ is the arc length parameter.
 \end{theorem}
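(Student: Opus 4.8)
The plan is to compute the first variation of the geometric functional directly and match it to the normal part \eqref{BE2}. First I would record the reparametrization scaling: under $t\mapsto h(t)$ one has $u\mapsto h'u$, $a\mapsto h''u+(h')^2a$, $\nabla_u a\mapsto h'''u+3h'h''a+(h')^3\nabla_u a$, so $V=\op{Vol}_g(u,a,\nabla_u a)\mapsto(h')^6V$ and $A=\op{Area}_g(u,a)\mapsto(h')^3A$; hence $L=V/A^2$ is reparametrization invariant and equals the torsion, $L=\tau$, with $\int L\,dt=\int\tau\,ds$ a genuine functional on unparametrized curves. Working in arc length ($|u|^2=1$, $u\cdot a=0$) and the Frenet frame $\{T,N,B\}$ with $u=T$, $a=\kappa N$, $\nabla_u a=-\kappa^2T+\kappa'N+\kappa\tau B$, the target \eqref{BE2} becomes the pair
\[ \kappa'=P(T,N),\qquad \kappa\tau=P(T,B). \]
Because $\int\tau\,ds$ is reparametrization invariant, tangential variations are trivial and it suffices to vary by normal fields $W=\mu N+\nu B$.

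The key step is the first variation $\delta\!\int\tau\,ds=\int(\delta\tau-\kappa\tau\mu)\,ds$, computed from the standard variation-of-frame formulas $\delta(ds)=-\kappa\mu\,ds$, $\delta T=(\mu'-\tau\nu)N+(\tau\mu+\nu')B$, and the induced expressions for $\delta N,\delta\kappa$. A direct calculation gives $\delta\tau$ as a differential operator in $\mu,\nu$ with top-order part $\tfrac1\kappa\nu'''+\tfrac{2\tau}\kappa\mu''$. After integrating by parts to isolate $\mu$ and $\nu$, the a priori fourth- and fifth-order contributions cancel identically -- for the $\nu$-equation because the $\nu'''$- and $\nu''$-coefficients combine as $-(\tfrac1\kappa)'''+(\tfrac1\kappa)'''=0$, and analogously for the $\mu$-equation -- leaving genuinely third-order Euler--Lagrange expressions. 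In flat space this collapse yields precisely
\[ \mathrm{EL}_\mu=\kappa\tau,\qquad \mathrm{EL}_\nu=-\kappa'. \]
This already reproduces \eqref{BE2} when $P=0$ (conformal geodesics of the flat structure are circles, $\kappa'=0,\ \kappa\tau=0$), and it explains how a third-order, odd-order equation can be variational at all: reparametrization invariance together with the precise form of $\tau$ forces the top orders to drop out.

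It remains to restore curvature. In a curved background the frame-variation identities acquire terms through the commutator $[\nabla_\epsilon,\nabla_s]Y=R(W,T)Y$, so $\mathrm{EL}_\mu$ and $\mathrm{EL}_\nu$ pick up curvature contractions built from $R(W,T)T$- and $R(N,B)T$-type expressions. Here dimension $3$ is decisive: the Weyl tensor vanishes, so the Riemann tensor is pure Schouten,
\[ R(X,Y)Z=P(Y,Z)X-P(X,Z)Y+g(Y,Z)P^\sharp X-g(X,Z)P^\sharp Y, \]
and substituting collapses every curvature term to the single contractions $P(T,B)$ and $P(T,N)$. One thereby obtains $\mathrm{EL}_\mu=\kappa\tau-P(T,B)$ and $\mathrm{EL}_\nu=-(\kappa'-P(T,N))$, whose vanishing is exactly the Frenet form of \eqref{BE2}.

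The main obstacle is the cancellation in the middle step: verifying that the fourth- and fifth-order parts of the Euler--Lagrange expressions vanish identically, so that one lands on a third-order system, is the delicate computation, and it is what makes the variational characterization possible. The second, more conceptual obstacle is confirming that the accumulated curvature terms assemble into precisely the Schouten contractions of \eqref{BE2}; this is where the hypothesis $\dim M=3$ is essential, since for $n\ge4$ the Weyl part of $R$ would contribute extra terms absent from \eqref{BE2}, and the total-torsion functional would no longer be its variational principle.
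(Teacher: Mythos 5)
Your argument is correct in substance but takes a genuinely different route from the paper. The paper works in jet coordinates: it first shows by explicit normal-coordinate computations (Lemmata \ref{ord5} and \ref{ord4}) that the Euler--Lagrange expression of \eqref{LL} has order $3$, then verifies (partly by computer algebra) that it vanishes on solutions of \eqref{BE0}, and finally compares the rank in $\dddot{x}$ of the two third-order systems to conclude equivalence. You instead compute the first variation of $\int\tau\,ds$ intrinsically in the Frenet frame: tangential variations are killed by reparametrization invariance, and for $W=\mu N+\nu B$ one finds
$\delta\tau=\tfrac{d}{ds}\bigl(\kappa^{-1}(\nu''+2\tau\mu'+\tau'\mu-\tau^{2}\nu)\bigr)+2\kappa\tau\mu+\kappa\nu'+g(R(W,T)N,B)$,
so every contribution of order greater than $3$ in the curve sits inside an exact derivative and integrates away --- this is the conceptual explanation of the paper's Lemmata \ref{ord5}--\ref{ord4}. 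Combining with $\delta(ds)=-\kappa\mu\,ds$ and integrating by parts gives exactly two equations, $\kappa\tau=P(T,B)$ and $\kappa'=P(T,N)$, once the three-dimensional identity $R(X,Y)Z=P(Y,Z)X-P(X,Z)Y+g(Y,Z)P^{\sharp}X-g(X,Z)P^{\sharp}Y$ (vanishing Weyl tensor) is inserted; your contractions check out, $g(R(N,T)N,B)=-P(T,B)$ and $g(R(B,T)N,B)=P(T,N)$, and this is precisely the Frenet form of \eqref{BE2}, with the rank-$2$ property automatic because the normal variation produces exactly two independent equations. Your route buys transparency --- it explains why the order drops and where dimension $3$ and the Schouten tensor enter, and it avoids Maple --- while the paper's route makes every step an explicit finite computation with no frame-variation formulas to trust. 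Two points you should still nail down: (i) the curved-case variation formulas are asserted rather than derived --- you must track all occurrences of $R$, both in $\nabla_{\epsilon}\nabla_{s}T$ and in $\nabla_{\epsilon}\nabla_{s}N$, and check that those entering only through the exact-derivative part indeed drop out (they do, but this is the delicate step); (ii) the Lagrangian of \eqref{LL} is $\ell V/A^{2}$, not $V/A^{2}$ --- the factor $\ell=|u|$ is what makes $L\,dt=\tau\,ds$ reparametrization invariant, since a density of homogeneity degree $0$ would not give a well-defined functional on unparametrized curves; this is harmless once you fix the arc-length gauge, but should be stated. As in the paper, the whole argument requires nondegeneracy $\kappa\neq0$ of the curve.
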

In the case of parametrized conformal geodesics the situation is opposite: the corresponding equation is not
variational. This peculiar behavior is not characteristic for classical variational problems, but the differential equations of
odd order are also non-standard and less studied.

 \begin{rk}
When our paper was ready and proofread, we discovered preprint \cite{TM} by T.\,Marugame that appeared in arXiv 
a week prior to our submission. Theorem 1.2 there states the same as our Theorem \ref{Thm}. 
The methods however are different, and the works independent. 

It is worth noticing that \cite{TM} relates conformal geodesics to chains in CR geometry. Previously those were shown
to be variational via a Kropina metric in \cite{CMMM}. Chains are complex analogs of geodesics; for more general
paths variationality is a restrictive property, see \cite{KM} and referenes therein.
 \end{rk}
 
The structure of the paper is as follows. After a short preliminary on the inverse variational problem,
we discuss the conformally flat case, in which case the problem was treated before in \cite{BF1,BF2}.
We revisit this with an independent computation and note that the known explicit Lagrangian is not covariant. 
Then we pass to the general case with a proper generalization of the Lagrangian. 
At the end of the paper we discuss conformal invariance of the Lagrangian.

\section{The inverse variational problem}\label{S2}

We restrict our discussion to the case of autonomous ODEs. Such systems of order $k$ on vector variable $\bx=(x^0,\dots,x^n)$ 
have the following form for some $\ff=(f^0,\dots,f^n)$ where $\bx^{(i)}=\tfrac{d^i}{dt^i}\bx(t)$:
 \begin{equation}\label{ODE}
\ff(\bx,\bx',\dots,\bx^{(k)})=0.
 \end{equation}
For parametrized solutions the inverse problem, namely if $\ff$ of \eqref{ODE} is given by the Euler--Lagrange operator 
$\tfrac{\delta L}{\delta\bx}$ for some Lagrangian $L=L(\bx,\bx',\dots)$, can be solved by the so-called Helmholtz conditions,
which are well-studied in the case of second-order equations $k=2$.
We refer to numerous literature on variational problems, see for instance \cite{An,G,Krp,Do} and references therein.

The inverse problem for a {\em differential equation\/} $\ff=0$ (contrary to that of a {\em differential operator} $\ff$),
i.e.\ whether \eqref{ODE} is an Euler--Lagrange equation for some $L$,
is much more difficult, and in general is not solved even for $k=2$. It is even more difficult in the case of unparametrized 
solutions (which for $k=2$ are the so-called paths). Namely, choose $x=x^0$ as a parameter along the solution-curve
$\by=\by(x)$ for $\by=(y^1,\dots,y^n)$, $y^j=x^j$ for $j=1,\dots,n$, and rewrite equation \eqref{ODE} as
a non-autonomous system 
 \begin{equation}\label{ODE'}
\by^{(k)}=\bh(x,\by,\by',\dots,\by^{(k-1)}),
 \end{equation}
where now $\by^{(k)}=\tfrac{d^k}{d x^k}\by(x)$ and $\bh=(h^1,\dots,h^n)$.

In the case of second-order $k=2$ the following is known. In dimension $n+1=2$	
every path structure is variational as was demonstrated by N.\,Sonin in 1886 \cite{So}. 
Dimension $n+1=3$ was investigated in detail by Davis and Douglas \cite{Dav,D}. 
Higher dimensions were also investigated in the literature, and indeed a generic ODE is not variational;
we refer to \cite{KM} for a discussion.

Our problem with the third order ODEs is more sophisticated.
Below we consider time-independent Lagrangian $L$ since the equations are autonomous.
The Euler--Lagrange operator $L\mapsto\op{EL}(L)$ has components
 $$
\frac{\delta L}{\delta x^k}=
\frac{\p L}{\p x^k}-\frac{d}{dt}\frac{\p L}{\p\dot{x}^k}+\frac{d^2}{dt^2}\frac{\p L}{\p\ddot{x}^k}-
\frac{d^3}{dt^3}\frac{\p L}{\p\dddot{x}^k}+\dots
 $$
where (in the autonomous case) $\frac{d}{dt}=\dot{x}^i\p_{x^i}+\ddot{x}^i\p_{\dot{x}^i}+\dddot{x}^i\p_{\ddot{x}^i}+\dots$
is the operator of total derivative. 
 
Suppose that $L$ is homogeneous in $\bx^{(i)}$ of degree $m_i$. Then the total degree is defined to be $|m|=\sum im_i$.

 \begin{prop}\label{P1}
Let a reparametrization invariant ODE system of 3rd order be variational. 
Then it is variational in the class of Lagrangians of order 2: 
there exists a function $L=L(\bx,\dot{\bx},\ddot{\bx})$ homogeneous of total degree $1$ and affine in 2-jets $\ddot{\bx}$,
whose extremals are precisely the curves of the ODE. 
 \end{prop}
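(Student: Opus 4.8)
The plan is to start from an arbitrary witnessing Lagrangian and normalize it in two stages, first using reparametrization invariance to fix its total degree and then integration by parts to lower its order. First I would exploit that the Euler--Lagrange operator preserves the grading by total degree: if $L$ is homogeneous of total degree $d$, then each term of $\op{EL}(L)$ is again homogeneous of weight $d$, since $\p/\p\bx^{(i)}$ lowers the weight by $i$ while the total derivative $\tfrac{d}{dt}$ raises it by $1$, so that $\tfrac{d^i}{dt^i}\tfrac{\p}{\p\bx^{(i)}}$ is weight-preserving. Reparametrization invariance of the system means precisely that it admits a homogeneous representative of weight $1$, corresponding to the homogeneity $L(\bx,\lambda\dot\bx,\lambda^2\ddot\bx,\dots)=\lambda L$ that makes $\int L\,dt$ parametrization independent. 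Decomposing any witnessing Lagrangian $L=\sum_d L_d$ into homogeneous components and matching weights against this representative, only the component $L_1$ of total degree $1$ contributes, so we may assume from the outset that $L$ is homogeneous of total degree $1$.

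Next I would reduce the order of $L$ to $2$ by a descending induction, with the order-$3$ requirement on $\op{EL}(L)$ supplying the exactness needed at each step. Let $r$ be the order of $L$. The top-order term of $\op{EL}(L)$ is $(-1)^r\tfrac{\p^2 L}{\p\bx^{(r)}\p\bx^{(r)}}\,\bx^{(2r)}$, so if $r\ge 3$ its vanishing forces $L$ to be affine in $\bx^{(r)}$, say $L=\langle A,\bx^{(r)}\rangle+B$ with $A,B$ of order $r-1$. Computing the next term, of order $2r-1$, and setting it to zero gives $\tfrac{\p A_k}{\p x^{(r-1),j}}=\tfrac{\p A_j}{\p x^{(r-1),k}}$; thus $A$ is closed in the top variable and, by the Poincar\'e lemma in $\bx^{(r-1)}$, one has $A=\p\Phi/\p\bx^{(r-1)}$ for some $\Phi$ of order $r-1$. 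Then $\langle A,\bx^{(r)}\rangle=\tfrac{d\Phi}{dt}-\sum_{i<r-1}\langle\p\Phi/\p\bx^{(i)},\bx^{(i+1)}\rangle$, so modulo the total derivative $\tfrac{d\Phi}{dt}$, which does not change $\op{EL}$, the Lagrangian drops to order $r-1$ and remains homogeneous of total degree $1$. Iterating, I reach an equivalent Lagrangian of order $2$.

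Finally, for an order-$2$ Lagrangian $L=L(\bx,\dot\bx,\ddot\bx)$ the leading term of $\op{EL}(L)$ is $\tfrac{\p^2L}{\p\ddot\bx\,\p\ddot\bx}\,\bx^{(4)}$; since the target equation has order $3$, this coefficient must vanish, so $L$ is automatically affine in $\ddot\bx$, which is exactly the asserted normal form. The main obstacle I anticipate is the first stage: justifying rigorously that a reparametrization-invariant variational \emph{equation} admits a Lagrangian of total degree exactly $1$, since a priori a witnessing Lagrangian reproduces the equation only up to an integrating multiplier, and one must control how this multiplier interacts with the homogeneous decomposition before the weight-matching argument applies. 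By comparison the order-reduction step is robust, its only delicate points being the local application of the Poincar\'e lemma on the jet space and the bookkeeping that total degree $1$ is preserved throughout the induction.
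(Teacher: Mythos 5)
Your proof is correct and follows essentially the same route as the paper: force affineness in the top jet from the order of the Euler--Lagrange equation, read off closedness of the coefficient 1-form from the next-order terms, integrate it to a potential by the Poincar\'e lemma in the top jet variables, and subtract the resulting total derivative, finishing with affineness in $\ddot{\bx}$. The only organizational differences are that the paper first invokes the Vainberg--Tonti formula to start from a third-order Lagrangian where you run a descending induction from arbitrary order, and that your weight-grading treatment of homogeneity (whose integrating-multiplier subtlety you rightly flag) is, if anything, more explicit than the paper's one-line appeal to reparametrization invariance.
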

 
 \begin{proof}
By the Vainberg--Tonti  formula \cite{Krp}, we may assume without loss of generality that the Lagrangian 
of the variational 3rd order system is of 3rd order: $\hat{L}=\hat{L}(\bx,\dot{\bx},\ddot{\bx},\dddot{\bx})$.
In the corresponding Euler--Lagrange equation $\op{EL}(\hat{L})$ the 6th order terms come from 
$\frac{d^3}{dt^3} \frac{\p\hat{L}}{\p\dddot{x}^i}$.
Hence the Lagrangian has the following form:
  \begin{equation}\label{eq:EL1}
\hat{L}(\bx,\dot{\bx},\ddot{\bx},\dddot{\bx})=F(\bx,\dot{\bx},\ddot{\bx})
+\sum_s\dddot{\bx}^s \lambda_s(\bx,\dot{\bx},\ddot{\bx}).
 \end{equation}
Vanishing of the 5th order jets in $\op{EL}(\hat{L})$ is equivalent to the system
 \begin{equation}\label{eq:EL2}
\frac{\p\lambda_s }{\p\ddot{x}^i} - \frac{\p\lambda_i }{\p\ddot{x}^s} =0.
  \end{equation}
Then there exists a function $\Lambda(\bx,\dot{\bx},\ddot{\bx})$ such that $\lambda_s= \tfrac{\p\Lambda}{\p\ddot{x}^s}$ 
implying $\sum_s \dddot{\bx}^s\tfrac{\p\Lambda}{\p\ddot{x}^s}= \tfrac{d}{dt} \Lambda 
 - \sum_s \dot{x}^s \tfrac{\p\Lambda}{\p x^s}  - \sum_s \ddot{x}^s\tfrac{\p\Lambda}{\p\dot{x}^s}$.

Since addition of the total derivative $-\tfrac{d}{dt}\Lambda$ to a Lagrangian does not change
the complete variation, the Euler--Lagrange equation with Lagrangian  \eqref{eq:EL1}
coincides with the Euler--Lagrange equation for
 $$
\tilde L= F(\bx,\dot{\bx},\ddot{\bx}) - \sum_s \dot{x}^s \tfrac{\p\Lambda}{\p x^s}  - \sum_s \ddot{x}^s\tfrac{\p\Lambda}{\p\dot{x}^s},
 $$
which is of the 2nd order. Furthermore, since the 4th order terms in $\op{EL}(\tilde{L})$ vanish, the Lagrangian
$\tilde{L}$ must be affine in $\ddot{\bx}$. Finally,
since by our assumptions a reparameterization of a solution $\bx(t)\mapsto\bx(\tau(t))$ is also a solution,
the Lagrangian $\tilde L$ is homogeneous of total degree 1. 
 \end{proof}

Thus we assume $L$ to be affine in acceleration, namely
 \begin{equation}\label{Laff}
L=\sum_{k=1}^3 \g_k(x,u)a^k+\g_0(x,u).
 \end{equation}
This actually gives third order Euler--Lagrange equations, yet they cannot be \eqref{BE0}. 

 \begin{prop}
Equation \eqref{BE0} of parametrized conformal geodesics in 3D is not variational. 
 \end{prop}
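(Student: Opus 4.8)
The plan is to rule out any variational description of \eqref{BE0} by examining the leading symbol of the Euler--Lagrange operator. First I would reduce the admissible Lagrangians exactly as in the proof of Proposition \ref{P1}: by the Vainberg--Tonti formula a Lagrangian whose Euler--Lagrange equation is of third order may be taken of second order, and the requirement that the fourth-order terms of $\op{EL}(L)$ vanish forces the acceleration-Hessian $\p^2 L/\p a^i\p a^j$ to vanish, i.e.\ $L$ is affine in the acceleration and hence of the form \eqref{Laff}. Note that this step does not use reparametrization invariance, so the total-degree-$1$ homogeneity of Proposition \ref{P1} is irrelevant here; only the affine structure \eqref{Laff} is needed.

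Next I would compute the coefficient of the third derivative $\dddot{x}^j$ in the $i$-th component of $\op{EL}(L)$ for $L$ as in \eqref{Laff}. The only third-order contributions come from $\frac{d^2}{dt^2}\frac{\p L}{\p a^i}=\frac{d^2}{dt^2}\g_i$ and from $-\frac{d}{dt}\frac{\p L}{\p u^i}$, and a short computation yields the leading coefficient matrix
\[
M_{ij}=\frac{\p\g_i}{\p u^j}-\frac{\p\g_j}{\p u^i},
\]
which is manifestly antisymmetric, $M_{ij}=-M_{ji}$. This is precisely the first Helmholtz (self-adjointness) condition for operators of odd order: the leading symbol of any variational operator of third order must be skew.

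Finally I would contrast this with \eqref{BE0}. In coordinates $\nabla_u a=\dddot{x}+(\text{terms of order}\le 2)$, so the leading coefficient matrix of \eqref{BE0} is the identity $\delta^i_j$: the system is normal and solves for all three $\dddot{x}^i$. Since $\dim M=3$ is odd, every antisymmetric $3\times3$ matrix has $\op{det}=0$, so $M$ is singular and $\op{EL}(L)$ is degenerate in the top order; it can never match the nondegenerate leading symbol of \eqref{BE0}. Equivalently, allowing the full freedom of the inverse problem --- replacing $f^i=(\nabla_u a)^i-(\cdots)$ by $\mu^i_j f^j$ with $\mu$ nondegenerate --- self-adjointness would force the new leading coefficient $\mu^i_j\delta^j_k=\mu^i_k$ to be antisymmetric, hence singular, contradicting invertibility of $\mu$. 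Therefore \eqref{BE0} coincides with no Euler--Lagrange equation.

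The point I would treat most carefully, and the main obstacle, is the precise meaning of variationality in the reduction step: one must ensure that reproducing the third-order solution family of \eqref{BE0} genuinely forces a second-order, affine-in-acceleration Lagrangian and excludes higher-order Lagrangians whose extremals might accidentally coincide, and that passing to a variational multiplier does not alter the solution set. Both are controlled by the order/rank count above --- a nondegenerate fourth-order Euler--Lagrange system carries a strictly larger solution manifold, while a nondegenerate multiplier preserves solutions --- so the genuine obstruction is the incompatibility, in odd dimension, between the skew leading symbol forced by variationality and the nondegenerate identity symbol of the parametrized equation \eqref{BE0}. This is exactly what disappears for the unparametrized version, where the reparametrization freedom renders the equation non-normal and its leading symbol admissibly singular.
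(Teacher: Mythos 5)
Your proof is correct and follows essentially the same route as the paper: reduce to a Lagrangian affine in acceleration via the argument of Proposition \ref{P1}, observe that the leading symbol of $\op{EL}(L)$ is the skew-symmetric matrix $\frac{\p\g_i}{\p u^j}-\frac{\p\g_j}{\p u^i}$, hence singular in odd dimension $3$, and conclude that the system cannot be resolved for all three $\dddot{x}^j$ as \eqref{BE0} requires. Your explicit treatment of the variational multiplier $\mu$ is a welcome refinement of the paper's terser ``not equivalent to \eqref{BE0}'' conclusion, but it does not change the substance of the argument.
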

 
 \begin{proof}
Denoting by dots the terms of jet-order 2 in $\op{EL}(L)$ for $L$ given by \eqref{Laff} we obtain the following:
 $$
\frac{\delta L}{\delta x^k}= \dots-\frac{d}{dt}\Bigl(\sum_{j=1}^3\frac{\p\g_j}{\p u^k}a^j+\dots\Bigr)
+\frac{d^2}{dt^2}\bigl(\g_k+\dots\bigr).
 $$
Thus the symbol, i.e.\ coefficients of the highest derivatives $\dddot{x}^j$, is the skew-symmetric $3\times3$ matrix $A$
with entries $a_{ij}=\frac{\p\g_i}{\p u^j}-\frac{\p\g_j}{\p u^i}$, and so its determinant vanishes $\det A=0$.
This implies that the 3 equations in the system $\op{EL}(L)=0$ cannot be resolved wrt $\{\dddot{x}^j\}_{j=1}^3$
and hence this system is not equivalent to \eqref{BE0}.
 \end{proof}

Note that variationality of the constrained problem \eqref{BE3} can be approached via the Lagrangian including a
Lagrange multiplier 
 $$
L= \sum_{k=1}^3 \g_k(x,u)a^k+\g_0(x,u)+\lambda\cdot(|u|^2-1).
 $$
Variation by $\lambda$ gives the constraint $|u|^2=1$, and from this and its differential corollaries 
we can express the components $u^3$ and $a^3$. In what follows we do not specify constraints in the Lagrangian,
but since we deal with the unparametrized problem, we are free to impose such constraints whenever convenient.
As we noted in the introduction under such constraint equation \eqref{BE0} becomes \eqref{BE3} and there is one
linear dependency among these three equations; equivalently \eqref{BE1} holds identically, leaving two equations \eqref{BE2}.

\section{The flat and homogeneous cases}\label{S3}

In the flat case $g=ds^2_{\text{Eucl}}$ (due to conformal invariance also corresponding to spaces of positive 
and negative curvature) the equation has the following form in $\R^3$ 
(note that in this case $u=\dot{x}$, $a=\ddot{x}$):
 $$
\nabla_ua=-|a|^2u,\quad |u|^2=1,\quad g(u,a)=0.
 $$
Its solutions are circles, so conformal geodesics are sometimes also called conformal circles (CS).

The following 3rd order Lagrangian was proposed in \cite{BF1,BF2} (this reference contains more general Lagrangians, 
whose variations give helices and loxadromas, and conformal circles as special cases):
 $$
L=\frac{\ell\cdot V}{A^2}, 
 $$
where $\ell=|u|=\sqrt{g(u,u)}=\sqrt{\dot{x}_1^2+\dot{x}_2^2+\dot{x}_3^2}$ is the length,
 $$
A=\sqrt{\det\begin{pmatrix}g(u,u) & g(u,a)\\ g(u,a) & g(a,a)\end{pmatrix}}=|u\times a|
 $$
is the area of the $ua$ parallelogram and 
 $$
V= \det\begin{pmatrix}u_1 & u_2 & u_3\\ a_1 & a_2 & a_3\\ b_1 & b_2 & b_3\end{pmatrix}=|(u\times a)\cdot b|
 $$
is the volume of the parallelepiped formed by $u=\dot{x}$, $a=\nabla_uu=\ddot{x}$, $b=\nabla_ua=\dddot{x}$.
(Remark also that $V$ is the square root of the Gram matrix of the $uab$ tetrad, up to sign or orientation.)

The meaning of $L$ in the arc length parameter is the classical torsion $\tau$ of space curves 
(more general Lagrangians $L=L(\kappa,\tau)$ depending on curvature and torsion were considered in \cite{FGJL}). 
The Lagrangian $L$ is invariant with respect to conformal rescaling (the same for all metrics 
$g=\lambda\cdot ds^2_{\text{Eucl}}$) but is not covariant (it depends on Euclidean coordinates)
and hence is not conformally invariant. Also note that the action 
 \begin{equation}\label{ActL}
\mathcal{L}(\gamma)=\int_\gamma L\,dt
 \end{equation}
is reparametrization invariant, so its extremals are unparametrized curves $\gamma\subset\R^3$.

 \begin{theorem}\label{Thm2}
Extremals of $\mathcal{L}$ are precisely the conformal circles.
 \end{theorem}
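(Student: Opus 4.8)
The plan is to compute the Euler--Lagrange expression $\op{EL}(L)$ of the reparametrization invariant action \eqref{ActL} directly and to identify it, up to a nonvanishing factor, with the normal part \eqref{BE2} of the conformal geodesic equation specialized to the flat metric. Two structural facts organize the computation. First, since the Euclidean Lagrangian is translation invariant, $L$ does not depend on $x$, so $\op{EL}(L)=-\frac{d}{dt}\frac{\p L}{\p u}+\frac{d^2}{dt^2}\frac{\p L}{\p a}-\frac{d^3}{dt^3}\frac{\p L}{\p b}$. Second, reparametrization invariance yields the Noether identity $\sum_k\frac{\delta L}{\delta x^k}\dot{x}^k\equiv 0$, obtained by inserting the infinitesimal reparametrization $\delta x=\epsilon(t)\,u$ into $\delta\mathcal{L}=0$. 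Hence $\op{EL}(L)$ is everywhere orthogonal to $u$ and has only two independent components, living in the normal plane $u^\perp$. This matches the two normal equations \eqref{BE2} and explains how a third-order Lagrangian reproduces the (two-component) unparametrized equation.

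Next I would pass to the arc length gauge $|u|=1$, which is permitted because $\mathcal{L}$ is reparametrization invariant and we may impose such constraints freely. In the Frenet frame $(T,N,B)$ with curvature $\kappa$ and torsion $\tau$ one has $u=T$, $a=\kappa N$ and $b=-\kappa^2T+\dot\kappa N+\kappa\tau B$, while the determinants evaluate to $\ell=1$, $A=\kappa$, $V=\kappa^2\tau$, so that $L=\tau$ and $\mathcal{L}=\int\tau\,ds$ is the total torsion functional. The core of the proof is the first variation of this functional: using the standard variation formulas for $\kappa$, $\tau$ and $ds$ under a normal deformation $W=gN+hB$ of the curve, one computes $\delta\int\tau\,ds$, integrates by parts, and reads off the two components of $\op{EL}(L)$ in the directions $N$ and $B$.

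Finally, I would simplify these two components and show that they are proportional, with a factor built from nonvanishing powers of $\kappa$, to the pair $(\dot\kappa,\kappa\tau)$; equivalently, $\op{EL}(L)$ is a nonzero multiple of the normal part of $\nabla_u a$, which (up to the rotation $u\times(\cdot)$ in the normal plane) is the left-hand side $u\times\nabla_u a=\dot\kappa\,B-\kappa\tau\,N$ of \eqref{BE2} for the flat metric $P=0$. Since the factor never vanishes for non-degenerate curves $\kappa>0$, the equation $\op{EL}(L)=0$ is equivalent to $\dot\kappa=0$ and $\tau=0$. By \eqref{BE0} specialized to the flat case these are precisely the conditions defining conformal circles (constant curvature, vanishing torsion, i.e.\ round circles), which settles both inclusions at once: the extremals of $\mathcal{L}$ are exactly the conformal circles.

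The main obstacle I expect is the first variation of the torsion: $\tau$ depends on third derivatives, and its variation under out-of-plane deformations is lengthy, so keeping track of the several integrations by parts and verifying that the boundary terms decouple is the delicate part. A related technical point is the degeneracy of the Frenet frame and of $L$ at inflection points $\kappa=0$, where $A=0$; one restricts to the open set of non-degenerate curves on which the unparametrized conformal geodesic equation is defined, or equivalently carries out the whole computation in Cartesian coordinates where $\op{EL}(L)$ is manifestly smooth away from $A=0$.
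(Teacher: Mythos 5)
Your argument is correct, but it takes a genuinely different route from the paper's. The paper proves Theorem \ref{Thm2} \emph{indirectly}: it shows that $\op{EL}(L)$ has order exactly $3$ and rank $2$ in the third jets, that it vanishes upon substitution of the conformal-circle equation, and then concludes equivalence by comparing ranks of the two rank-$2$ systems --- deliberately so, because this scheme is the one that survives the passage to a general curved metric in Sections \ref{S4}--\ref{S5}, where Frenet-frame variation formulas acquire curvature terms and the identification with \eqref{BE2} (including the Schouten term) would be much harder to read off directly. Your route --- gauge-fix to arc length, recognize $\mathcal{L}=\int\tau\,ds$, and compute the first variation of total torsion under a normal deformation $W=gN+hB$ --- is essentially the direct computation of \cite{BF1}, which the paper cites as already containing the theorem; your structural preliminaries (the off-shell identity $\sum_k\dot{x}^k\,\delta L/\delta x^k=0$ from reparametrization invariance, hence $\op{EL}\perp u$ with only two independent components) coincide with steps 4--5 of the paper's proof. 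The computation you defer does close: with $\delta(ds)=-\kappa g\,ds$, $\delta\kappa=g''+(\kappa^2-\tau^2)g-2\tau h'-\tau'h$ and $\delta\tau=2\kappa\tau g+\kappa h'+\bigl(\kappa^{-1}(h''-\tau^2h+2\tau g'+\tau'g)\bigr)'$, the exact-derivative terms drop out and one integration by parts gives $\delta\!\int\tau\,ds=\int(\kappa\tau\,g-\kappa'\,h)\,ds$ modulo boundary terms, so the two Euler--Lagrange components are $\kappa\tau$ and $-\kappa'$; in fact $\op{EL}$ equals $\pm\,u\times\nabla_ua$ on the nose, so your ``nonvanishing factor in powers of $\kappa$'' is simply $\pm1$, and the equivalence with the flat case of \eqref{BE2} (hence with $\kappa'=0$, $\tau=0$) follows. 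What each approach buys: yours is shorter and produces the Euler--Lagrange expression in closed form in the flat case; the paper's is heavier here but serves as the template for the curved proof of Theorem \ref{Thm}. The one caveat is that, as written, your proposal asserts rather than performs the variation of $\tau$; since that computation is the entire content of the theorem beyond the structural remarks, it must be carried out (as above, or as in \cite{BF1}) for the proof to be complete.
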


This theorem is contained in \cite{BF1} (see the table on p.103529-5). Let us present another indirect approach,
which we will afterwards generalize to the curved context.

 \begin{proof}
The proof is divided in several steps (details are omitted but can be read off the next section).

1. Since the Lagrangian is linear in 3rd jet $b=\dddot{x}$, the Euler--Lagrange equation $\op{EL}=\frac{\delta L}{\delta x}$
is of order $<6$. Its derivatives by 5th jets are easily seen to be zero, so it is of order at most 4.

2. In fact, a direct computation shows that derivatives by 4th jets vanish as well, so the Euler--Lagrange equation $\op{EL}$
has order at most 3. 

3. The order is, indeed, equal to 3, because derivatives by 3rd jets are non-zero, 
and in fact the corresponding $3\times 3$ matrix $[\p\op{EL}_i/\p b_j]$ has rank 2.

4. The fact that this rank is non-maximal follows from the relation (when using summation we convert to
the standard convention of upper indices for vectors, etc)
 $$
\sum_1^3\dot{x}^i\frac{\delta L}{\delta x^i}=0.
 $$
Hence that rank cannot exceed 2, and it indeed equals 2.

5. The action and hence  the extremals are reparametrization invariant. Hence we can choose any parametrization 
to check that the equation is equivalent to the equation of conformal circles. 

6. We choose the parametrization as in the beginning of this section. Substitution of those equations in the 
Euler--Lagrange equation gives zeros, therefore $\op{EL}$ is a corollary of the CS equation.

7. On the other hand, the unparametrized CS equations consist of 2 equations, 
and the rank of $\op{EL}$ by 3rd jets is 2, hence we conclude the equivalence.
 \end{proof}

By Proposition \ref{P1} we know that the order of the Lagrangian can be reduced to 2. And indeed we can take
 \begin{equation}\label{2ndordL}
L'=L-\frac{d}{dt}\Bigl(\arctan\frac{\ddot{x}_1|u|^2-g(u,a)\dot{x}_1}
{(\dot{x}_2\ddot{x}_3-\dot{x}_3\ddot{x}_2)|u|}\Bigr)
=\frac{\dot{x}_1(\dot{x}_2\ddot{x}_3-\dot{x}_3\ddot{x}_2)}{(\dot{x}_2^2+\dot{x}_3^2)
\sqrt{\dot{x}_1^2+\dot{x}_2^2+\dot{x}_3^2}}.
 \end{equation}
We can take a more symmetric Lagrangian by averaging over indices (123) with the same extremals CS:
 $$
\tilde{L}=
\frac{\dot{x}_2\dot{x}_3(\dot{x}_2^4-\dot{x}_3^4)\ddot{x}_1+\dot{x}_3\dot{x}_1(\dot{x}_3^4-\dot{x}_1^4)\ddot{x}_2
+\dot{x}_1\dot{x}_2(\dot{x}_1^4-\dot{x}_2^4)\ddot{x}_3}{(\dot{x}_1^2+\dot{x}_2^2)(\dot{x}_2^2+\dot{x}_3^2)
(\dot{x}_3^2+\dot{x}_1^2)\sqrt{\dot{x}_1^2+\dot{x}_2^2+\dot{x}_3^2}} 
 $$

 \begin{rk}
Before coming to the general computation we experimented with homogeneous 3D conformal structures,
and observed that for a modified Lagrangian $L$ the equation of motion $\op{EL}(L)$ coincides with
the equation for conformal geodesics. 
We omit details of those computations, as 
they were superseded by our general computation in the next section.
 \end{rk}

\section{The general Lagrangian}\label{S4}

Having inspiration in the homogeneous cases let us now approach general curved conformal structures $[g]$, 
with the following Lagrangian
 \begin{equation}\label{LL}
L=\frac{\ell\cdot V}{A^2}, 
 \end{equation}
where $\ell=|u|=\sqrt{g(u,u)}$, $g(u,u)=g_{ij}\dot{x}^i\dot{x}^j$, is the length,
 $$
A=\sqrt{\det\begin{pmatrix}g(u,u) & g(u,a)\\ g(u,a) & g(a,a)\end{pmatrix}}
 $$
is the area, computed as before, and the curved version of the volume is
 $$
V= \sqrt{\det[g_{ab}]}\epsilon_{ijk}u^ia^jb^k,
 $$
where $a=\nabla_uu$, $b=\nabla_ua$. More precisely
 \begin{gather*}
u^k=\dot{x}^k,\qquad a^k=\ddot{x}^k+\Gamma^k_{ij}\dot{x}^i\dot{x}^j,\\
b^k=\dddot{x}^k+3\Gamma^k_{ij}\dot{x}^i\ddot{x}^j+S^k_{ijl}\dot{x}^i\dot{x}^j\dot{x}^l, 
 \end{gather*}
where $\Gamma^k_{ij}=\tfrac12g^{kl}(\p_ig_{jl}+\p_jg_{il}-\p_lg_{ij})$ are the Christoffel symbols
and $S^k_{ijl}=\p_l\Gamma^k_{ij}+\Gamma^k_{lp}\Gamma^p_{ij}$. Note that $S^k_{i[jl]}=\tfrac12R^k_{lij}$
is the Riemann curvature tensor. Note also the symmetry $S^k_{ijl}=S^k_{(ji)l}$, which 
can be changed to complete symmetry by $(ijl)$ due to appearance of $S^k_{ijl}$ in the formula for $b^k$.

The geometric meaning of Lagrangian \eqref{LL} is the following:
 \begin{lemma}
In an arbitrary parameter $t$ of the curve, the torsion is given by the formula $\tau=V/A^2$.
Therefore $L\,dt=\tau\,ds$ is parameter independent.
 \end{lemma}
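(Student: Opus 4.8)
The plan is to identify the ratio $V/A^2$ with the Frenet torsion $\tau$ in two steps: first to show that $V/A^2$ is invariant under reparametrization of the curve, and then to evaluate it in the arc length parameter, where the covariant Frenet--Serret apparatus applies directly. Recall that in 3D the metric cross product $u\times a$ is well defined, $A=|u\times a|$ is the area, and $V=(u\times a)\cdot b=\op{Vol}_g(u,a,b)$ is the signed volume.

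For the invariance I would record how the adapted triple $u,a,b$ transforms under a change of parameter $t\mapsto s$ with $\dot s=ds/dt$. Writing $v=dx/ds$, $\alpha=\nabla_vv$, $\beta=\nabla_v\alpha$ for the covariant derivatives taken in the $s$-parameter, the chain rule $\nabla_u=\dot s\,\nabla_v$ together with the product rule for the scalar coefficients gives
\begin{gather*}
u=\dot s\,v,\qquad a=\dot s^2\,\alpha+\ddot s\,v,\\
b=\dot s^3\,\beta+3\dot s\ddot s\,\alpha+\dddot s\,v.
\end{gather*}
Then $u\times a=\dot s^3\,(v\times\alpha)$, since the term $\ddot s\,v$ contributes $v\times v=0$, whence $A^2=|u\times a|^2=\dot s^6\,|v\times\alpha|^2$. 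Moreover $V=(u\times a)\cdot b=\dot s^3\,(v\times\alpha)\cdot b$, and because $v\times\alpha$ is orthogonal to both $v$ and $\alpha$, only the $\beta$-term of $b$ survives, so $V=\dot s^6\,(v\times\alpha)\cdot\beta$. The common factor $\dot s^6$ cancels, and $V/A^2$ is parameter independent.

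I would then evaluate $V/A^2$ in the arc length parameter, where $v=T$ is the unit tangent and $\alpha=\nabla_TT=\kappa N$, so that $v\times\alpha=\kappa B$. The covariant Frenet--Serret equation $\nabla_TN=-\kappa T+\tau B$ yields $\beta=\nabla_T\alpha=-\kappa^2T+\kappa'N+\kappa\tau B$, and hence $(v\times\alpha)\cdot\beta=\kappa^2\tau$ while $|v\times\alpha|^2=\kappa^2$. Therefore $V/A^2=\tau$ in arc length, and by the first step in any parameter. Finally $L=\ell\,V/A^2=\ell\,\tau$, and since $ds=\ell\,dt$ this gives $L\,dt=\tau\,ds$, which is manifestly parameter independent.

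The main obstacle is the correct reparametrization of the third-order covariant jerk $b$: applying $\nabla_u=\dot s\,\nabla_v$ to $a=\dot s^2\alpha+\ddot s\,v$ produces the inhomogeneous terms $3\dot s\ddot s\,\alpha$ and $\dddot s\,v$, and one must verify that these lie in the span of $\{v,\alpha\}$ so that they are annihilated in $A^2$ by $v\times v=0$ and in $V$ by orthogonality to $v\times\alpha$. Once this alternating cancellation is secured, the remainder is a short computation in the orthonormal Frenet frame; the only convention to fix is the orientation $\op{Vol}_g(T,N,B)=1$, which aligns the sign of $V$ with that of $\tau$.
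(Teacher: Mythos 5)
Your proof is correct and follows essentially the same route as the paper: reduce to the arc length parameter via reparametrization invariance of $V/A^2$, then compute $A^2=\kappa^2$ and $V=\kappa^2\tau$ from the covariant Frenet--Serret formulae. The only difference is that you spell out the transformation of $u,a,b$ under reparametrization explicitly, where the paper simply asserts that the invariance is easily checked.
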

 
 \begin{proof}
There exists a unique orthonormal frame $(T,N,B)$ along a nondegenerate curve $\gamma\subset M$
(a generic curve is such and this suffices for our purposes), where $T=\dot{x}/|\dot{x}|$ and the rest can be found 
from the generalized Frenet-Serret formulae (see e.g.\cite{Gu}):
 $$
\nabla_TT=\kappa N,\ \nabla_TN=-\kappa T+\tau B,\ \nabla_TB=-\tau T,
 $$
where $\kappa$ is the curvature and $\tau$ is the torsion. 
 
One can easily check that $V/A^2$ does not change under a reparametrization of the curve, so
it is sufficient to calculate it in the arc length parameter $s$, where $u=T$. Then $a=\kappa N$ and hence
 $$
A^2=\kappa^2.
 $$
Since $b=\nabla_ua=\nabla_u(\kappa N)=-\kappa^2 T+\kappa'N+\kappa\tau B$ (derivative wrt $s$) we conclude
 $$
V=\op{Vol}_g(u,a,b)= \kappa^2\tau.
 $$
The above two formulas imply the claim.
 \end{proof}
 
 \begin{rk}
As a by-product of the proof we note that along conformal geodesic \eqref{BE0} we have:
 $$
\kappa\tau=g(b,B)=g(P^\sharp u,B)=P(T,B)=\op{Ric}(T,B). 
 $$ 
This gives another approach to the Lagrangian of conformal geodesics via Ricci curvature.
 \end{rk}

Thus, we conclude that $L$ is covariant (coordinate-independent due to use of covariant derivative)
and isometry (but not conformally) invariant, and that the corresponding action 
$\mathcal{L}$ is projectively invariant. Thus extremals are reparametrization invariant curves. 

Let us note that in pseudo-Riemannian case the Lagrangian $L$ is not defined everywhere, as the denominator of \eqref{LL} 
may vanish. In this sense it is similar to the Kropina-like Lagrangian considered in \cite{CMMM}.
However $\kappa$ and $\tau$ are well-defined for nondegenerate curves (specified by the requirement that both
$T$ and $N$ in the Frenet-Serret  frame are non-null).

\section{The proof}\label{S5}

Theorem \ref{Thm} is a curved version of Theorem \ref{Thm2}; we will prove it
in steps following the strategy from Section~\ref{S3}. The computation is demanding; we use several tricks to simplify it. 
We first check that the Euler--Lagrange equation has order at most 5, 
then subsequetly reduce this to 4 and 3. Finally we show it has the same rank in $\dddot{x}$
as the equation of conformal geodesics and vanishes modulo this equation.

Recall that for $L=L(x,u,a,b)$ or equivalently $L=L(x,\dot{x},\ddot{x},\dddot{x})$ the variational derivative is
 \begin{equation}\label{ELord3}
\op{EL}_i=\frac{\delta L}{\delta x^i}=\frac{\p L}{\p x^i}-\frac{d}{dt}\frac{\p L}{\p\dot{x}^i}
+\frac{d^2}{dt^2}\frac{\p L}{\p\ddot{x}^i}-\frac{d^3}{dt^3}\frac{\p L}{\p\dddot{x}^i},
 \end{equation}
which obviously implies $\op{ord(EL)}<6$ because $L$ is affine in $\dddot{x}$.
 
 \begin{lemma}\label{ord5}
The Euler--Lagrange equation has $\op{ord(EL)}<5$.
 \end{lemma}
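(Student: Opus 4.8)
The plan is to show that the fifth-order terms in $\op{EL}(L)$ cancel, by tracking precisely where a fifth derivative $\bx^{(5)}$ can enter. Since $L=L(x,\dot{x},\ddot{x},\dddot{x})$ is affine in $\dddot{x}$ (by the structure of \eqref{LL}, as $V$ is linear in $b$ and $b$ is affine in $\dddot{x}$), the only term in \eqref{ELord3} that can produce order-$5$ contributions is $\frac{d^2}{dt^2}\frac{\p L}{\p\ddot{x}^i}$; the term $\frac{d^3}{dt^3}\frac{\p L}{\p\dddot{x}^i}$ produces at most order $4$ because $\frac{\p L}{\p\dddot{x}^i}$ depends only on $(x,\dot{x},\ddot{x})$. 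First I would write $L=G+\sum_s\dddot{x}^s\mu_s$ with $G,\mu_s$ functions of $(x,\dot{x},\ddot{x})$ only, so that $\frac{\p L}{\p\dddot{x}^i}=\mu_i$ and $\frac{\p L}{\p\ddot{x}^i}=\frac{\p G}{\p\ddot{x}^i}+\sum_s\dddot{x}^s\frac{\p\mu_s}{\p\ddot{x}^i}$.

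The second step is to extract the coefficient of $\bx^{(5)}$. Applying the total derivative $\frac{d}{dt}$ twice to $\frac{\p L}{\p\ddot{x}^i}$, a fifth jet $x^{(5),s}$ appears exactly once, namely from differentiating the explicit $\dddot{x}^s$ factor twice: $\frac{d^2}{dt^2}\bigl(\dddot{x}^s\frac{\p\mu_s}{\p\ddot{x}^i}\bigr)$ contributes $x^{(5),s}\frac{\p\mu_s}{\p\ddot{x}^i}$ at top order. Hence the order-$5$ symbol of $\op{EL}_i$ is $\sum_s x^{(5),s}\frac{\p\mu_s}{\p\ddot{x}^i}$, and the whole point reduces to showing the matrix $B_{is}=\frac{\p\mu_s}{\p\ddot{x}^i}$ vanishes — equivalently that $\mu_s=\frac{\p L}{\p\dddot{x}^s}$ is independent of $\ddot{x}$. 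But $\frac{\p L}{\p\dddot{x}^s}=\frac{\p L}{\p b^k}\frac{\p b^k}{\p\dddot{x}^s}=\frac{\ell}{A^2}\frac{\p V}{\p b^s}$, since $\frac{\p b^k}{\p\dddot{x}^s}=\delta^k_s$ and only $V$ carries $b$-dependence. Because $V=\sqrt{\det[g_{ab}]}\,\epsilon_{ijk}u^ia^jb^k$ is linear in $b$, the quantity $\frac{\p V}{\p b^s}=\sqrt{\det[g_{ab}]}\,\epsilon_{ijs}u^ia^j$ depends on $b$ not at all — but it \emph{does} still involve $a=\ddot{x}+\Gamma(\dot{x},\dot{x})$, so this alone does not finish the argument.

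This is the crux, and I expect it to be the main obstacle: the naive symbol $B_{is}=\frac{\p}{\p\ddot{x}^i}\bigl(\frac{\ell}{A^2}\sqrt{\det g}\,\epsilon_{jks}u^ja^k\bigr)$ is genuinely nonzero as a matrix, so the fifth-order terms do not cancel entry-by-entry; they must cancel because $B$ is \textbf{skew-symmetric} and contracts against the symmetric object $x^{(5),s}$ in a way that, combined with the full $\frac{d^2}{dt^2}$ expansion (including cross terms where one $\frac{d}{dt}$ hits $\dddot{x}^s$ and the other hits $\frac{\p\mu_s}{\p\ddot{x}^i}$, lowering the jet order), produces the fifth-jet coefficient as a skew-symmetric contraction $\sum_s B_{[is]}x^{(5),s}$. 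Concretely, from $\frac{\p\mu_s}{\p\ddot{x}^i}=\frac{\p}{\p a^k}\bigl(\frac{\ell}{A^2}\sqrt{\det g}\,\epsilon_{jls}u^ja^l\bigr)\delta^k_i$ one finds the top-order contribution is antisymmetric in $(i,s)$ — the $\frac{\ell}{A^2}\sqrt{\det g}\,\epsilon_{jis}u^j$ piece is manifestly skew in $i\leftrightarrow s$, while the piece differentiating the scalar prefactor $\frac{\ell}{A^2}$ is symmetrized away when paired with $x^{(5),s}=x^{(5),i}$ under the sum.

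The final step is therefore to argue that the genuine fifth-jet coefficient of $\op{EL}_i$ equals $\sum_s\bigl(\frac{\p\mu_s}{\p\ddot{x}^i}-\frac{\p\mu_i}{\p\ddot{x}^s}\bigr)x^{(5),s}$ — this is exactly the Helmholtz-type combination \eqref{eq:EL2} appearing in Proposition \ref{P1}, transposed one jet-level up — and then to verify the closedness relation $\frac{\p\mu_s}{\p\ddot{x}^i}=\frac{\p\mu_i}{\p\ddot{x}^s}$. Since $\mu_s=\frac{\ell}{A^2}\sqrt{\det g}\,\epsilon_{jks}u^ja^k$, I would compute $\frac{\p\mu_s}{\p a^i}=\frac{\ell}{A^2}\sqrt{\det g}\,\epsilon_{jis}u^j+\bigl(\frac{\p}{\p a^i}\frac{\ell}{A^2}\bigr)\sqrt{\det g}\,\epsilon_{jks}u^ja^k$; the first summand is skew in $(i,s)$ so it drops from the symmetric difference, and the second summand, being $\frac{\p(\ell/A^2)}{\p a^i}\cdot(\text{thing})_s$, has the form $c_i d_s$, whose antisymmetrization $c_id_s-c_sd_i$ must be shown to vanish. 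Here $d_s=\sqrt{\det g}\,\epsilon_{jks}u^ja^k$ is proportional to the $s$-component of $u\times a$, and $\frac{\p(\ell/A^2)}{\p a^i}$ is likewise proportional to a component built from $u\times a$ (since $A^2=|u\times a|^2$ and $\ell$ is $a$-independent), so both $c$ and $d$ are parallel, giving $c_id_s-c_sd_i=0$. This closes the relation \eqref{eq:EL2} at the top jet level and kills the fifth-order part, establishing $\op{ord(EL)}<5$.
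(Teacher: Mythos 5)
Your identification of the fifth-order symbol as the Helmholtz-type antisymmetrization $\bigl(\tfrac{\p^2L}{\p\ddot x^i\p\dddot x^j}-\tfrac{\p^2L}{\p\ddot x^j\p\dddot x^i}\bigr)x^{(5),j}$ lands on the same starting point as the paper's formula \eqref{eq_01}, but both the route you take to it and the mechanism you propose for its vanishing contain genuine errors. First, a bookkeeping point: you assert that $\tfrac{d^3}{dt^3}\tfrac{\p L}{\p\dddot x^i}$ has order at most $4$ because $\mu_i=\tfrac{\p L}{\p\dddot x^i}$ depends only on $(x,\dot x,\ddot x)$. In fact each total derivative raises the jet order by one, so this term contributes $-x^{(5),j}\tfrac{\p\mu_i}{\p\ddot x^j}$ at top order; that is precisely where the second half of the antisymmetrization comes from, not from ``cross terms'' inside $\tfrac{d^2}{dt^2}\tfrac{\p L}{\p\ddot x^i}$ as you suggest. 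Relatedly, $x^{(5)}$ is a vector, not a symmetric tensor, so nothing is ``symmetrized away'' by contracting a skew matrix against it.

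The more serious gap is in the final cancellation. Writing $\tfrac{\p\mu_s}{\p\ddot x^i}=\tfrac{\ell}{A^2}\sqrt{|g|}\,\epsilon_{jis}u^j+c_i d_s$ with $c_i=\p_{a^i}(\ell/A^2)$ and $d_s=\sqrt{|g|}\,\epsilon_{jks}u^ja^k$, you claim (a) that the first summand ``drops from the symmetric difference'' because it is skew in $(i,s)$, and (b) that $c_id_s-c_sd_i=0$ because $c$ and $d$ are parallel. Both are false. The quantity to be killed is the \emph{antisymmetrization} $\tfrac{\p\mu_s}{\p\ddot x^i}-\tfrac{\p\mu_i}{\p\ddot x^s}$, so a term skew in $(i,s)$ does not drop --- it doubles. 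And $c\propto \ell^2a-(u\cdot a)u$ lies in the plane spanned by $u$ and $a$, while $d\propto u\times a$ is normal to that plane, so $c$ and $d$ are orthogonal rather than parallel: for $u=(1,0,0)$, $a=(0,|a|,0)$, $\ell=1$ one gets $c_2d_3-c_3d_2\propto|a|^2\neq0$. Neither piece vanishes separately; the actual cancellation is the three-term identity
\begin{equation*}
\epsilon_{ikj}u^kA^2-\bigl(\ell^2a_j-(u\cdot a)u_j\bigr)\epsilon_{ipq}u^pa^q+\bigl(\ell^2a_i-(u\cdot a)u_i\bigr)\epsilon_{jpq}u^pa^q=0,
\end{equation*}
in which the skew $\epsilon_{ikj}u^kA^2$ term cancels against the $c\wedge d$ contribution. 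This must be verified directly (the paper does so after normalizing $\ell=1$, $u\cdot a=0$, $g_{ij}=\delta_{ij}$ and checking representative index pairs); your proposed decomposition into two separately vanishing pieces does not close the argument.
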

 
 \begin{proof}
Only two last two terms of $\op{EL}_i$ contribute affinely to 5th jets, and this contribution is
 $$
\Bigl(\frac{\p^2L}{\p\ddot{x}^i\dddot{x}^j}-\frac{\p^2L}{\p\ddot{x}^j\dddot{x}^i}\Bigr)\stackrel{\rm v}{x}{\!}^j.
 $$
Only the factor $V$ from $L$ contributes to the derivatives in $\dddot{x}$, but for this part 
the matrix $\Bigl[\frac{\p^2V}{\p\ddot{x}^i\p\dddot{x}^j}\Bigr]$ is skew-symmetric.
However, taking into account all factors we get symmetry and vanishing.

Denoting $\op{alt}_{[ij]}$ the alternation by $[ij]$, 
and writing also $\sqrt{|g|}=\sqrt{\det[g_{ab}]}$, $u_i=g_{ik}u^k$, etc, we get:
 \begin{align}
\quad\qquad
\frac{\p\op{EL}_i}{\p\!\stackrel{\rm v}{x}{\!}^j}=2\op{alt}_{[ij]}\frac{\p^2L}{\p\ddot{x}^i\dddot{x}^j}
=& \frac{\ell}{A^2}\Bigl(\frac{\p^2V}{\p\ddot{x}^i\p\dddot{x}^j}-\frac{\p^2V}{\p\ddot{x}^j\p\dddot{x}^i}\Bigr)-
\frac{\ell}{A^4}\Bigl(\frac{\p A^2}{\p\ddot{x}^i}\frac{\p V}{\p\dddot{x}^j}-\frac{\p A^2}{\p\ddot{x}^j}\frac{\p V}{\p\dddot{x}^i}
\Bigr)\notag\\
=&\,2\op{alt}_{[ij]}\Bigl(\frac{\ell}{A^2}\sqrt{|g|}\epsilon_{kij}u^k
-\frac{2\ell}{A^4}\Bigl(\ell^2a_i-(u\cdot a)u_i\Bigr)\sqrt{|g|}\epsilon_{pqj}u^pa^q\Bigr).\quad\qquad
\label{eq_01}
 \end{align}
Up to a common nonzero factor this expression is
 $$
\epsilon_{ikj}u^kA^2- \Bigl(\ell^2a_j-(u\cdot a)u_j\Bigr)\epsilon_{ipq}u^pa^q
+\Bigl(\ell^2a_i-(u\cdot a)u_i\Bigr)\epsilon_{jpq}u^pa^q=0.
 $$
The last equality is obvious if $i=j$. In the case $i\neq j$ one can permute indices to make $i=1$, $j=2$,
and verify it directly. To simplify computations one may assume (due to covariance and reparametrizaton invariance)
that $\ell^2=1$, $u\cdot a=0$ and $g_{ij}=\delta_{ij}$ at the given point. Then the result is straightforward.
 \end{proof}

Denote by $O(d)$ a differential polynomial on $d$-jets. Then for $L=L(x,\dot{x},\ddot{x},\dddot{x})$
we get the following formulae:
 \begin{align*}
\frac{\p L}{\p x^i} &=O(3),\\
\frac{d}{dt}\frac{\p L}{\p\dot{x}^i} &= \frac{\p^2L}{\p\dot{x}^i\p\dddot{x}^j}\stackrel{\rm iv}{x}{\!}^j+O(3),\\ 
\frac{d^2}{dt^2}\frac{\p L}{\p\ddot{x}^i} &= \frac{\p^2L}{\p\ddot{x}^i\p\dddot{x}^j}\stackrel{\rm v}{x}{\!}^j
+\Bigl(2\frac{\p^3L}{\p x^k\p\ddot{x}^i\p\dddot{x}^j}\dot{x}^k
+2\frac{\p^3L}{\p\dot{x}^k\p\ddot{x}^i\p\dddot{x}^j}\ddot{x}^k
+2\frac{\p^3L}{\p\ddot{x}^k\p\ddot{x}^i\p\dddot{x}^j}\dddot{x}^k
+\frac{\p^2L}{\p\ddot{x}^i\p\ddot{x}^j}\Bigr)\stackrel{\rm iv}{x}{\!}^j +O(3),\\ 
\frac{d^3}{dt^3}\frac{\p L}{\p\dddot{x}^i} &= \frac{\p^2L}{\p\ddot{x}^j\p\dddot{x}^i}\stackrel{\rm v}{x}{\!}^j
+\Bigl(3\frac{\p^3L}{\p x^k\p\ddot{x}^j\p\dddot{x}^i}\dot{x}^k
+3\frac{\p^3L}{\p\dot{x}^k\p\ddot{x}^j\p\dddot{x}^i}\ddot{x}^k
+3\frac{\p^3L}{\p\ddot{x}^k\p\ddot{x}^j\p\dddot{x}^i}\dddot{x}^k
+\frac{\p^2L}{\p\dot{x}^j\p\dddot{x}^i}\Bigr)\stackrel{\rm iv}{x}{\!}^j +O(3).
 \end{align*}
 
Now we can compute the coefficients of the 4th jets. 
 
 \begin{lemma}\label{ord4}
The Euler--Lagrange equation has $\op{ord(EL)}<4$.
 \end{lemma}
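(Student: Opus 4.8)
The goal of Lemma \ref{ord4} is to show that the coefficients of the 4th jets $\stackrel{\rm iv}{x}{\!}^j$ in $\op{EL}_i$ vanish, which would reduce the order from 5 to 3 (we already know from Lemma \ref{ord5} that there is no 5th order contribution). The plan is to read off the total coefficient of $\stackrel{\rm iv}{x}{\!}^j$ from the four expansion formulae displayed just before the lemma, assemble them into a single tensorial expression, and show that the resulting matrix $M_{ij}$ is zero. Concretely, from $\op{EL}_i = \p_{x^i}L - \tfrac{d}{dt}\p_{\dot x^i}L + \tfrac{d^2}{dt^2}\p_{\ddot x^i}L - \tfrac{d^3}{dt^3}\p_{\dddot x^i}L$, the coefficient of $\stackrel{\rm iv}{x}{\!}^j$ comes from the second, third, and fourth terms, giving
 \begin{align*}
M_{ij}=&-\frac{\p^2L}{\p\dot{x}^i\p\dddot{x}^j}
+\Bigl(2\frac{\p^3L}{\p x^k\p\ddot{x}^i\p\dddot{x}^j}\dot{x}^k
+2\frac{\p^3L}{\p\dot{x}^k\p\ddot{x}^i\p\dddot{x}^j}\ddot{x}^k
+2\frac{\p^3L}{\p\ddot{x}^k\p\ddot{x}^i\p\dddot{x}^j}\dddot{x}^k
+\frac{\p^2L}{\p\ddot{x}^i\p\ddot{x}^j}\Bigr)\\
&-\Bigl(3\frac{\p^3L}{\p x^k\p\ddot{x}^j\p\dddot{x}^i}\dot{x}^k
+3\frac{\p^3L}{\p\dot{x}^k\p\ddot{x}^j\p\dddot{x}^i}\ddot{x}^k
+3\frac{\p^3L}{\p\ddot{x}^k\p\ddot{x}^j\p\dddot{x}^i}\dddot{x}^k
+\frac{\p^2L}{\p\dot{x}^j\p\dddot{x}^i}\Bigr),
 \end{align*}
and I must prove $M_{ij}=0$ identically.

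The strategy I would follow mirrors the reduction-of-order philosophy behind Proposition \ref{P1}: for a variational operator the vanishing of the top symbol (here the 5th jet, already handled) forces an integrability/closedness condition that in turn kills the next-order coefficient. I would first separate $M_{ij}$ into its symmetric and skew-symmetric parts in $(ij)$. Since $L$ is affine in $\dddot{x}$, every factor $\p/\p\dddot{x}$ strips $L$ down to the coefficient of $b$ in $V$, namely $\sqrt{|g|}\,\epsilon_{pqr}u^p a^q$ times the prefactor $\ell/A^2$; this is the same object already computed in \eqref{eq_01}. So each term in $M_{ij}$ is a derivative (in $x$, $\dot x$, or $\ddot x$) of that explicit expression, and the whole of $M_{ij}$ is a differential polynomial in at most 3-jets with no remaining $\dddot x$-dependence except the explicit $\dddot{x}^k$ factors, which combine with the $\tfrac{\p^3L}{\p\ddot x^k\p\ddot x^i\p\dddot x^j}$ coefficients. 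The key simplification is that $L$ is homogeneous of total degree 1 (Proposition \ref{P1} and the Lemma in Section \ref{S4}), so Euler's identity relations among the partial derivatives let me collapse many of these terms.

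The main obstacle is managing the sheer combinatorial bulk: there are three third-derivative contractions in each bracket and they do not pair up termwise, so a naive expansion is unwieldy. To control this I would exploit exactly the normalizations used at the end of the proof of Lemma \ref{ord5}: by covariance and reparametrization invariance evaluate at a point where $g_{ij}=\delta_{ij}$, $\ell^2=g(u,u)=1$, $u\cdot a=0$, and moreover choose coordinates in which the first derivatives of $g$ (hence $\Gamma^k_{ij}$) vanish at the point, reducing $a^k$ to $\ddot x^k$ and the $x$-derivative terms to ones involving curvature only. After this reduction, $\tfrac{\p L}{\p\dddot x^j}$ becomes the flat expression $\sqrt{|g|}\,\epsilon_{pqj}u^pa^q\cdot \ell/A^2$, already a polynomial in $u,a$ alone, so the three $\dddot x^k$-linear terms drop out (no $\dddot x$ remains after differentiation) and the problem reduces to verifying that a purely algebraic combination of $u,a$ and the (Euclidean) area $A$ vanishes. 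I would then reduce to the representative indices $i=1,j=2$ as in Lemma \ref{ord5} and check the identity directly. Since the construction is tensorial, vanishing at one point in one gauge implies vanishing everywhere, which gives $\op{ord(EL)}<4$.
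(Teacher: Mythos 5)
Your overall strategy is the same as the paper's: isolate the coefficient of $\stackrel{\rm iv}{x}{\!}^j$ from the displayed expansion formulae, pass to normal coordinates at a point with the normalizations $g_{ij}\doteq\delta_{ij}$, $\ell=1$, $u\cdot a=0$, and verify vanishing pointwise. However, there is a concrete error that would make your computation fail.

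You assert that ``the three $\dddot x^k$-linear terms drop out (no $\dddot x$ remains after differentiation).'' This is false. The coefficients $\tfrac{\p^3L}{\p\ddot{x}^k\p\ddot{x}^i\p\dddot{x}^j}$ are indeed independent of $\dddot x$, but they are generically nonzero functions of $(u,a)$ (the paper computes them explicitly in \eqref{eq_21}), and they are multiplied by the free jet coordinate $\dddot{x}^k$, which is not zero. These terms are essential: after substituting $\dddot{x}^r\doteq b^r-\p_d(\Gamma^r_{bc})u^bu^cu^d$ (note that even in normal coordinates $\dddot x^r\neq b^r$ at the point, since $\p\Gamma\not\doteq 0$), their $b^r$-part combines with $\tfrac{\p^2L}{\p\ddot{x}^i\p\ddot{x}^j}$ — which is itself proportional to $b^r$ by \eqref{eq_31} — in the cancellation \eqref{4th2}, and their $\p\Gamma$-part cancels the $\p_k\Gamma$-terms coming from $\tfrac{\p^3L}{\p x^k\p\ddot{x}^i\p\dddot{x}^j}$ in \eqref{4th3}. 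If you discard them, you are left both with an uncancelled $b$-dependent contribution from $\tfrac{\p^2L}{\p\ddot{x}^i\p\ddot{x}^j}$ and with uncancelled curvature terms $\p_k\Gamma$ from the $x$-derivatives, so the residual expression is not ``a purely algebraic combination of $u,a$ and $A$'' and does not vanish. Relatedly, the final identity genuinely involves $b$ (the paper's component check has entries proportional to $b^2$, $b^3$), and since the coefficient matrix of the 4th jets is not antisymmetric, you cannot reduce the verification to the single representative pair $(i,j)=(1,2)$ as in Lemma \ref{ord5}; all nine components must be checked (after rotating $u,a,b$ to a standard position). The appeals to symmetric/antisymmetric splitting and Euler's homogeneity identity are not used in the paper and do not substitute for tracking these cancellations.
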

 
 \begin{proof} 
Fix an arbitrary point $x_0\in M$, where we are going to verify the claim.
We will use the following notation: $f\doteq h$ iff $f(x_0)=h(x_0)$. 
Due to covariance we may fix a normal coordinate system centered at the point $x_0$,
so we have: $g_{ij}\doteq\delta_{ij}$, $\sqrt{|g|}\doteq1$ and
$\p_k(g_{ij})\doteq0$, $\p_k(\sqrt{|g|})\doteq0$, $\Gamma^{k}_{ij}\doteq0$. 

By reparametrization invariance of the Lagrangian, we may also assume $\ell=1$. This implies 
$g(u,a)=0$, $g(u,b)=-|a|^2$, $A^2=|a|^2$ and $V^2=|a|^2|b|^2-|a|^6-g(a,b)^2$.

In order to keep the use of index notation consistent, we use the notation $\Gamma_{ijk} := g_{ks} \Gamma^{s}_{ij}$. 
Now we subsequently calculate various coefficients of the fourth order terms of $\op{EL}$.

1. We start with
 \begin{equation}\label{eq_10}
\frac{\p^2L}{\p\dot{x}^i\p\dddot{x}^j}= \frac{\ell\,\p_{\dot{x}^i}\p_{\dddot{x}^j}(V)}{A^2} +
\frac{\p_{\dot{x}^i}(\ell)\,\p_{\dddot{x}^j}(V)}{A^2} - \frac{\ell\,\p_{\dot{x}^i}(A^2)\,\p_{\dddot{x}^j}(V)}{A^4}.
 \end{equation}

We compute:
 \begin{equation}\label{eq_11}
\p_{\dot{x}^i}(\ell^2) = 2u_i,\quad \p_{\dot{x}^i}(\ell) = \frac{u_i}{\ell}\doteq u_i.
 \end{equation}
and 
 $$
\p_{\dot{x}^i}\p_{\dddot{x}^j}(V) = \sqrt{|g|}\epsilon_{pqj}\p_{\dot{x}^i}(u^p a^q)
= \sqrt{|g|}\epsilon_{iqj} a^q + 2\sqrt{|g|}\epsilon_{pqj} u^p \Gamma^q_{ci} u^c \doteq\epsilon_{iqj} a^q
 $$
where we use
 \begin{equation}\label{eq_11.5}
\p_{\dot{x}^i}(a^k) = 2\Gamma_{ij}^ku^j\doteq0.
 \end{equation}
The latter formula implies
 \begin{equation}\label{eq_12}
\p_{\dot{x}^i}(|a|^2) = 4 \Gamma_{ij}^k u^ja_k\doteq0,\quad
\p_{\dot{x}^i}(g(u,a))  =  g_{kl} \p_{\dot{x}^i}(u^ka^l) = a_i + 2u_k\Gamma_{ij}^ku^j\doteq a_i.
 \end{equation}
Hence we get
 \begin{equation}\label{eq_13}
\p_{\dot{x}^i}(A^2) = \p_{\dot{x}^i}(\ell^2 |a|^2 - g(u,a)^2) 
= 4 \ell^2 \Gamma_{il}^k u^l a_k + 2 |a|^2 u_i - 2 g(u,a) (a_i + 2 \Gamma_{il}^k u^l u_k)\doteq 2|a|^2u_i. 
 \end{equation}
Using these formulae we conclude that, at $x_0$, the required coefficient \eqref{eq_10} is
 \begin{equation}\label{eq_14}
\frac{\p^2L}{\p\dot{x}^i\p\dddot{x}^j}\doteq \frac1{|a|^2}\Bigl(\epsilon_{iqj}a^q-\epsilon_{pqj} u_iu^pa^q\Bigr).
 \end{equation}

2. Using formula \eqref{eq_01} from Lemma \ref{ord5} we write
 \begin{equation}\label{eq_20}
\frac{\p^3L}{\p\ddot{x}^k\p\ddot{x}^i\p\dddot{x}^j}= 
\p_{\ddot{x}^k}\Bigl(\frac{\ell}{A^2}\sqrt{|g|}\epsilon_{lij} u^l - \frac{2\ell}{A^4}(\ell^2a_i - (u\cdot a)u_i)\,\sqrt{|g|} \epsilon_{pqj}u^pa^q\Bigr)
 \end{equation}
and using $\p_{\ddot{x}^k}(A^2)\doteq 2a_k$ we obtain
 \begin{equation*}
\p_{\ddot{x}^k}\Bigl(\frac{\ell}{A^2}\sqrt{|g|}\epsilon_{lij} u^l\Bigr) =
-\frac{\ell}{A^4}\sqrt{|g|}\epsilon_{lij}u^l\p_{\ddot{x}^k}(A^2)\doteq\frac{-2 }{|a|^4} \epsilon_{lij} u^l a_k.
 \end{equation*}

Then we compute
 \begin{equation*}
    \begin{split}
\p_{\ddot{x}^k}\Bigl( -\frac{2\ell}{A^4}(\ell^2a_i &- g(u,a)u_i)\,\sqrt{|g|}\epsilon_{pqj}u^p a^q\Bigr) 
 \doteq  -2\epsilon_{pqj}u^p \left[ \p_{\ddot{x}^k}\Bigl(\frac{a_ia^q}{A^4}\Bigr)  
- u_i\p_{\ddot{x}^k}\Bigl(\frac{g(u,a)a^q}{A^4}\Bigr) \right] \\
& \doteq -2\epsilon_{pqj}u^p\left[\frac{\p_{\ddot{x}^k}(a_i a^q)}{|a|^4} - \frac{2a_ia^q\p_{\ddot{x}^k}(A^2)}{|a|^6} - 
\frac{u_i\,\p_{\ddot{x}^k}(g(u,a)a^q )}{|a|^4} \right]\\
& \doteq -2\epsilon_{pqj}u^p\left[\frac1{|a|^4}(a_i\delta^q_k+\delta_{ik}a^q) -\frac4{|a|^6}a_ia_ka^q 
-\frac1{|a|^4}u_iu_ka^q\right].
    \end{split}
 \end{equation*}
At $x_0$, we have 
 $$
\frac{\p^3L}{\p\ddot{x}^k\p\ddot{x}^i\p\dddot{x}^j} \doteq -\frac2{|a|^4}\epsilon_{lij}u^la_k 
- \frac2{|a|^4}\epsilon_{pkj}u^pa^i - \frac2{|a|^4}\epsilon_{pqj}u^p\delta_{ik}a^q + \frac2{|a|^4}\epsilon_{pqj}u^pa^qu_iu_k
+ \frac8{|a|^6}\epsilon_{pqj}u^pa^qa_ia_k 
 $$
and this iterates into
 \begin{align}\label{eq_21}
2\frac{\p^3L}{\p\ddot{x}^k\p\ddot{x}^i\p\dddot{x}^j} &- 3\frac{\p^3L}{\p\ddot{x}^k\p\ddot{x}^j\p\dddot{x}^i} \doteq
- \frac{10}{|a|^4}\epsilon_{lij}u^la_k - \frac4{|a|^4}\epsilon_{pkj}u^pa_i - \frac4{|a|^4}\epsilon_{pqj}u^p\delta_{ik}a^q 
+ \frac4{|a|^4}\epsilon_{pqj}u_iu_ku^pa^q \\
& +\frac{16}{|a|^6}\epsilon_{pqj}a_iu^pa^qa_k +\frac6{|a|^4}\epsilon_{pki}u^pa_j +\frac6{|a|^4}\epsilon_{pqi}u^p\delta_{jk}a^q  - \frac6{|a|^4}\epsilon_{pqi}u_ju_ku^pa^q - \frac{24}{|a|^6}\epsilon_{pqi}a_ju^pa^qa_k.\notag
 \end{align}

3. Next we consider
 $$
\frac{\p^2L}{\p\ddot{x}^i\p\ddot{x}^j} = 
\ell\left[\frac{\p_{\ddot{x}^i}\p_{\ddot{x}^j}(V)}{A^2} - \frac{\p_{\ddot{x}^i}(A^2)\p_{\ddot{x}^j}(V)}{A^4} 
- \frac{\p_{\ddot{x}^i}(V)\p_{\ddot{x}^j}(A^2)}{A^4}  - \frac{V\p_{\ddot{x}^i}\p_{\ddot{x}^j}(A^2)}{A^4} 
+ \frac{2V\p_{\ddot{x}^i}(A^2)\,\p_{\ddot{x}^j}(A^2)}{A^6} \right].
 $$
We compute
 $$
\p_{\ddot{x}^j}(V) = \sqrt{|g|}\epsilon_{pqr}u^p\p_{\ddot{x}^j}(a^q b^r) 
= \sqrt{|g|}\epsilon_{pjr}u^pb^r + 3\sqrt{|g|}\epsilon_{pqr}u^pa^q\Gamma_{cj}^r u^c,
 $$
then
 $$
\p_{\ddot{x}^i}\p_{\ddot{x}^j}(V) = \sqrt{|g|}\epsilon_{pjr}u^p\p_{\ddot{x}^i}(b^r) + 3\sqrt{|g|}\epsilon_{pqr}u^p\delta^q_i \Gamma_{cj}^r u^c 
 = 3\sqrt{|g|}\epsilon_{pjr}u^p\Gamma_{ic}^ru^c + 3\sqrt{|g|}\epsilon_{pir}u^p\Gamma_{cj}^ru^c \doteq 0,
 $$
and also
 $$
\p_{\ddot{x}^i}\p_{\ddot{x}^j}(A^2) = 2\p_{\ddot{x}^i}(\ell^2 a_j - g(u,a) u_j) =  2(\delta_{ij} - u_iu_j).
 $$
Thus, at $x_0$, we have
\begin{equation}\label{eq_31}
    \begin{split}
\frac{\p^2L}{\p\ddot{x}^i\p\ddot{x}^j}
& \doteq  -\frac2{|a|^4}\epsilon_{pjr}u^pb^ra_i  -\frac2{|a|^4}\epsilon_{pir}u^pb^ra_j -\frac{2V}{|a|^4}(\delta_{ij} - u_i u_j) 
+ \frac{8V}{|a|^6}a_ia_j \\ 
& \doteq -\left(\frac2{|a|^4}u^p(\epsilon_{pjr}a_i+\epsilon_{pir}a_j) +\frac2{|a|^4}\epsilon_{pqr}u^pa^q (\delta_{ij} - u_i u_j) 
- \frac8{|a|^6}\epsilon_{pqr}u^pa^qa_ia_j\right) b^r
    \end{split}
\end{equation}

4. Using again formula \eqref{eq_01} we find that
\begin{equation}\label{eq_40}
    \begin{split}
\frac{\p^3L}{\p\dot{x}^k\p\ddot{x}^i\p\dddot{x}^j} & = 
\p_{\dot{x}^k}\left(\frac{\ell}{A^2}\sqrt{|g|}\epsilon_{lij}u^l - \frac{2 \ell }{A^4} (\ell^2 a_i - g(u,a) u_i)\,\sqrt{|g|}\epsilon_{pqj} u^p a^q\right) \\
& = \sqrt{|g|}\epsilon_{lij}\p_{\dot{x}^k}\Bigl(\frac{\ell u^l}{A^2}\Bigr) - 
2\,\sqrt{|g|}\epsilon_{pqj}\left(\p_{\dot{x}^k}\Bigl(\frac{\ell^3}{A^4}a_iu^pa^q\Bigr)-
\p_{\dot{x}^k}\Bigl(\frac{\ell}{A^4}g(u,a)u_iu^pa^q\Bigr) \right).
    \end{split}
\end{equation}

Using \eqref{eq_11} and \eqref{eq_13} we compute first
\begin{equation}
    \begin{split}
\p_{\dot{x}^k}\Bigl(\frac{\ell u^l}{A^2}\Bigr) & = \delta^l_k\frac{\ell}{A^2} + u^l\frac{\p_{\dot{x}^k}(\ell)}{A^2} 
- \ell u^l\frac{\p_{\dot{x}^k}(A^2)}{A^4} \\
& \doteq \delta^l_k\frac1{|a|^2} + \frac{u^lu_k}{|a|^2} - \frac2{|a|^2}u^lu_k = \frac{\delta^l_k}{|a|^2} - \frac{u^lu_k}{|a|^2},
    \end{split}
\end{equation}
Then we compute using \eqref{eq_11.5} and \eqref{eq_13}
 \begin{align}
2\sqrt{|g|}\epsilon_{pqj}\p_{\dot{x}^k}\left(\frac{\ell^3}{A^4}a_i u^p a^q\right) &= 
2\sqrt{|g|}\epsilon_{pqj}\left(\frac{\ell^3}{A^4}\bigl(\delta^p_ka_ia^q + u^p\p_{\dot{x}^k}(a_i a^q)\bigr) + 
\frac{3\ell^2}{A^4}a_iu^pa^q\p_{\dot{x}^k}(\ell) - \frac{2\ell^3}{A^6}a_iu^pa^q\p_{\dot{x}^k}(A^2) \right)\notag\\
&\doteq \frac2{|a|^2}\epsilon_{kqj}a_ia^q -  \frac2{|a|^4}\epsilon_{pqj}a_iu^pa^qu_k. \label{eq_41}
 \end{align}
Then we compute using \eqref{eq_12}
 \begin{equation}\label{eq_42}
2\sqrt{|g|}\epsilon_{pqj}\p_{\dot{x}^k}\left(\frac{\ell}{A^4}g(u,a)u_iu^pa^q\right)  =  
2\sqrt{|g|}\epsilon_{pqj}\frac1{A^4}\p_{\dot{x}^k}(g(u,a)u_iu^pa^q)
\doteq\frac2{|a|^4}\epsilon_{pqj}u^pu_ia^qa_k.
 \end{equation}
Thus, at $x_0$, we have
 $$
\frac{\p^3L}{\p\dot{x}^k\p\ddot{x}^i\p\dddot{x}^j} \doteq \frac{\epsilon_{kij}}{|a|^2} - \epsilon_{lij}\frac{u^lu_k}{|a|^2} 
 - \frac2{|a|^2}\epsilon_{kqj}a_ia^q + \frac2{|a|^4}\epsilon_{pqj}u^pa^qa_iu_k + \frac2{|a|^4}\epsilon_{pqj}u^pa^qu_ia_k,
 $$
which implies
\begin{align}
2\frac{\p^3L}{\p\dot{x}^k\p\ddot{x}^i\p\dddot{x}^j} - 3\frac{\p^3L}{\p\dot{x}^k\p\ddot{x}^j\p\dddot{x}^i} 
& \doteq \frac{5\epsilon_{kij} }{|a|^2} - 5\epsilon_{lij}\frac{u^lu_k}{|a|^2}  - \frac4{|a|^2}\epsilon_{kqj}a_ia^q + 
\frac4{|a|^4}\epsilon_{pqj}u^pa^qa_iu_k + \frac4{|a|^4}\epsilon_{pqj}u^pa^qu_ia_k\notag\\ 
& + \frac{6}{|a|^2}\epsilon_{kqi} a_j a^q - \frac{6}{|a|^4}\epsilon_{pqi} u^p a^q a_j u_k 
- \frac{6}{|a|^4} \epsilon_{pqi} u^p a^q u_j a_k. \label{eq_43}
\end{align}

5. At last, again using \eqref{eq_01}, let us compute
 \begin{equation}\label{eq_50}
\frac{\p^3L}{\p x^k\p\ddot{x}^i\p\dddot{x}^j} = \p_{x^k}\left(\frac{\ell}{A^2}\sqrt{|g|}\epsilon_{lij}u^l 
- \frac{2\ell}{A^4}(\ell^2a_i - (u\cdot a)u_i)\,\sqrt{|g|}\epsilon_{pqj}u^pa^q\right).
 \end{equation}

We have
 \begin{equation}\label{eq_51}
\p_{x^k}(\ell^2)=\p_k(g_{cd})u^cu^d,\qquad \p_{x^k}(\ell)=\frac1{2\ell}\p_k(g_{cd})u^cu^d\doteq0,
 \end{equation}
and
 \begin{equation}\label{eq_52}
\p_{x^k}(a^q)=\p_k(\Gamma^q_{cd})u^cu^d,\qquad
\p_{x^k}(|a|^2)\doteq 2a_q\p_k(\Gamma^q_{cd})u^cu^d,
 \end{equation}
and also
 \begin{equation*}
\p_{x^k}(g(u,a) )\doteq u_q\p_k(\Gamma^q_{cd})u^cu^d.
 \end{equation*}
Now the product rule and equations \eqref{eq_51}-\eqref{eq_52} yield
 \begin{equation}\label{eq_53}
\p_{x^k}(A^2) = \p_{x^k}\bigl(\ell^2 |a|^2 - g(u,a)^2\bigr) 
\doteq \p_{x^k}(\ell^2)\,|a|^2 + \p_{x^k}(|a|^2) \doteq 2a_q\p_k(\Gamma^q_{cd})u^cu^d.
 \end{equation}

Consequently, using \eqref{eq_51} and \eqref{eq_53}, we obtain
 \begin{equation}
\p_{x^k}\left(\frac{\ell}{A^2}\sqrt{|g|}\right) \doteq - \frac2{|a|^4}a_q\p_k(\Gamma^q_{cd})u^cu^d  
 \end{equation}

Next we compute, again using \eqref{eq_51} and \eqref{eq_53},
 \begin{equation}\label{eq_54}
    \begin{split}
\p_{x^k}\left(\frac{2 \ell^3}{A^4}\sqrt{|g|}a_iu^pa^q\right) & \doteq 
2u^p\left[\p_{x^k}\Bigl(\frac{\ell^3}{A^4}\Bigr)a_i a^q + \frac1{|a|^4}\p_{x^k}(\sqrt{|g|}a_ia^q)\right] \\
& \doteq - \frac8{|a|^6}u^pa^qa_ia_s\p_k(\Gamma^s_{cd})u^cu^d 
+ \frac2{|a|^4}\bigl(\p_k(\Gamma_{cdi})u^cu^du^pa^q + \p_k(\Gamma_{cd}^q)u^cu^du^pa_i\bigr).
    \end{split}
\end{equation}
The remaining term to compute is
 \begin{equation}\label{eq_55}
\p_{x^k}(2u^p\frac{\ell}{A^4}\sqrt{|g|}\,g(u,a)u_ia^q) \doteq 
\frac2{|a|^4}u^p\p_k\bigl(\sqrt{|g|}\,g(u,a)u_ia^q)\bigr) 
\doteq \frac2{|a|^4}u^pu_ia^qu_s\p_k(\Gamma^s_{cd})u^cu^d.
 \end{equation}
 
				\pagebreak

Now substituting \eqref{eq_53}-\eqref{eq_55} into \eqref{eq_50} we get
\begin{equation*}
    \begin{split}
\frac{\p^3L}{\p x^k\p\ddot{x}^i\p\dddot{x}^j} \doteq  & 
- \frac2{|a|^4}\epsilon_{lij}u^la_s\p_k(\Gamma^s_{cd})u^cu^d 
+ \frac8{|a|^6}\epsilon_{pqj}u^pa_ia^qa_s\p_k(\Gamma^s_{cd})u^cu^d  \\
&  - \frac2{|a|^4}\epsilon_{pqj}u^pa^q\p_k(\Gamma_{cdi})u^cu^d 
- \frac2{|a|^4}\epsilon_{pqj}u^p\p_k(\Gamma_{cd}^q)u^cu^da_i 
+ \frac2{|a|^4}\epsilon_{pqj}u_iu^pa^qu_s\p_k(\Gamma^s_{cd})u^cu^d.
    \end{split}
\end{equation*}
From this we deduce (using $\epsilon_{lji} = -\epsilon_{lij}$ to simplify)
\begin{equation}\label{eq_56}
    \begin{split}
\!\!2\frac{\p^3L}{\p x^k\p\ddot{x}^i\p\dddot{x}^j} &- 3\frac{\p^3L}{\p x^k\p\ddot{x}^j\p\dddot{x}^i} \doteq \\ 
&\!\!\! - \frac{10}{|a|^4}\epsilon_{lij} u^l a_s \p_k(\Gamma^s_{cd}) u^c u^d 
+ \frac{16}{|a|^6}\epsilon_{pqj} u^p  a_i a^q  a_s \p_k(\Gamma^s_{cd}) u^c u^d  
- \frac{4}{|a|^4}\epsilon_{pqj} u^p a^q  \p_k(\Gamma_{cdi}) u^c u^d \\
&\!\!\! - \frac{4}{|a|^4}\epsilon_{pqj} u^p\p_k(\Gamma_{cd}^q) u^c u^d a_i 
+ \frac{4 }{|a|^4}\epsilon_{pqj} u_i u^p a^q u_s\p_k(\Gamma^s_{cd}) u^c u^d 
- \frac{24}{|a|^6}\epsilon_{pqi} u^p  a_j a^q  a_s \p_k(\Gamma^s_{cd}) u^c u^d \\ 
&\!\!\! + \frac{6}{|a|^4}\epsilon_{pqi} u^p a^q \p_k(\Gamma_{cdj}) u^c u^d 
+ \frac{6}{|a|^4}\epsilon_{pqi} u^p\p_k(\Gamma_{cd}^q) u^c u^d a_j 
- \frac{6}{|a|^4}\epsilon_{pqi} u_j u^p a^q u_s\p_k(\Gamma^s_{cd}) u^c u^d.
    \end{split}
\end{equation}

We are now ready to show that the fourth-order coefficients of the Euler--Lagrange equation vanish. 
We freely rename dummy indices in order to observe cancellations. We also exploit the relations
 $$
\dot{x}^k=u^k,\qquad \ddot{x}^k \doteq a^k,\qquad \dddot{x}^r \doteq b^r - \p_d(\Gamma_{bc}^r) u^b u^c u^d
 $$
From the formulae before Lemma \ref{ord5} we get:
\begin{equation}\label{eq_fourth_order_coeff}
    \begin{split}
\frac{\p EL_i}{\p\!\stackrel{\rm iv}{x}{\!}^j} & =
\frac{\p^2L}{\p\ddot{x}^i\p\ddot{x}^j} - \frac{\p^2L}{\p\dot{x}^i\p\dddot{x}^j} - \frac{\p^2L}{\p\dot{x}^j\p\dddot{x}^i} 
+ \Bigl(2\frac{\p^3L}{\p\ddot{x}^r\p\ddot{x}^i\p\dddot{x}^j} 
- 3\frac{\p^3L}{\p\ddot{x}^r\p\ddot{x}^j\p\dddot{x}^i}\Bigr)\dddot{x}^r  \\ 
& + \Bigl(2\frac{\p^3L}{\p\dot{x}^k\p\ddot{x}^i\p\dddot{x}^j} - 3\frac{\p^3L}{\p\dot{x}^k\p\ddot{x}^j\p\dddot{x}^i}\Bigr)\ddot{x}^k 
+ \Bigl(2\frac{\p^3L}{\p x^d\p\ddot{x}^i\p\dddot{x}^j} - 3\frac{\p^3L}{\p x^d\p\ddot{x}^j\p\dddot{x}^i}\Bigr)\dot{x}^d \\ 
& \doteq \frac{\p^2L}{\p\ddot{x}^i\p\ddot{x}^j} -\frac{\p^2L}{\p\dot{x}^i\p\dddot{x}^j} - \frac{\p^2L}{\p\dot{x}^j\p\dddot{x}^i} 
+ \Bigl(2\frac{\p^3L}{\p\ddot{x}^r\p\ddot{x}^i \p\dddot{x}^j} -3\frac{\p^3L}{\p\ddot{x}^r\p\ddot{x}^j\p\dddot{x}^i}\Bigr)b^r\\ 
& - \Bigl(2\frac{\p^3L}{\p\ddot{x}^r\p\ddot{x}^i\p\dddot{x}^j} 
- 3 \frac{\p^3L}{\p\ddot{x}^r \p\ddot{x}^j \p\dddot{x}^i}\Bigr) \p_d(\Gamma_{bc}^r) u^b u^c u^d  
+ \Bigl(2\frac{\p^3L}{\p\dot{x}^k \p\ddot{x}^i \p\dddot{x}^j} -3\frac{\p^3L}{\p\dot{x}^k \p\ddot{x}^j \p\dddot{x}^i}\Bigr)a^k\\ 
& +  \Bigl(2\frac{\p^3L}{\p x^d \p\ddot{x}^i \p\dddot{x}^j} - 3\frac{\p^3L}{\p x^d \p\ddot{x}^j \p\dddot{x}^i}\Bigr) u^d.
    \end{split}
\end{equation}

First, note that by \eqref{eq_43} we have:
\begin{equation}\label{4th1}
    \begin{split}
\left(2\frac{\p^3L}{\p\dot{x}^k \p\ddot{x}^i \p\dddot{x}^j}\right. 
&-\left. 3\frac{\p^3L}{\p\dot{x}^k \p\ddot{x}^j \p\dddot{x}^i}\right) a^k 
\doteq 
\frac{5\epsilon_{kij}}{|a|^2} a^k - 5\epsilon_{lij}\frac{u^l u_k a^k}{|a|^2} - \frac{4}{|a|^2}\epsilon_{kqj} a_i a^q a^k 
+ \frac{4}{|a|^4}\epsilon_{pqj} u^p a^q a_i u_k a^k \\ 
& + \frac{4}{|a|^4}\epsilon_{pqj} u^p a^q u_i a_k a^k + \frac{6}{|a|^2}\epsilon_{kqi} a_j a^q a^k 
- \frac{6}{|a|^4}\epsilon_{pqi} u^p a^q a_j u_k a^k - \frac{6}{|a|^4}\epsilon_{pqi} u^p a^q u_j a_k a^k  \\ 
& = \frac5{|a|^2}\epsilon_{kij} a^k + \frac4{|a|^2}\epsilon_{pqj} u^p a^q u_i - \frac6{|a|^2}\epsilon_{pqi} u^p a^q u_j, 
\end{split}
\end{equation}
where we used that $\epsilon_{kql}a^qa^k = 0$ by skew-symmetry.
Next, using \eqref{eq_21}-\eqref{eq_31} and $g(u,b)=-|a|^2$, we get:
\begin{equation}\label{4th2}
    \begin{split}
\frac{\p^2L}{\p\ddot{x}^i \p\ddot{x}^j} &+ \left(2\frac{\p^3L}{\p\ddot{x}^r \p\ddot{x}^i \p\dddot{x}^j} 
- 3\frac{\p^3L}{\p\ddot{x}^r \p\ddot{x}^j \p\dddot{x}^i}\right) b^r \\
& \doteq \left( -\frac{2}{|a|^4}\epsilon_{pjr} u^p a_i  - \frac{2}{|a|^4}\epsilon_{pir} u^p a_j 
- \frac{2}{|a|^4}\epsilon_{pqr} u^p a^q (\delta_{ij} - u_i u_j) + \frac{8}{|a|^6}\epsilon_{pqr} u^p a^q a_i a_j  \right. \\ 
& - \frac{10}{|a|^4}\epsilon_{lij} u^l a_r - \frac{4}{|a|^4}\epsilon_{prj} u^p a_i - \frac{4}{|a|^4}\epsilon_{pqj} u^p\delta_{ir} a^q  
+ \frac{4}{|a|^4} \epsilon_{pqj} u_i u_r u^p a^q + \frac{16}{|a|^6}\epsilon_{pqj} a_i u^p a^q a_r \\ 
& \left. + \frac{6 }{|a|^4}\epsilon_{pri} u^p a_j + \frac{6}{|a|^4}\epsilon_{pqi} u^p\delta_{jr} a^q  
- \frac{6}{|a|^4}\epsilon_{pqi} u_j u_r u^p a^q - \frac{24}{|a|^6}\epsilon_{pqi} a_j u^p a^q a_r \right) b^r \\ 
& = \frac{2}{|a|^4}\epsilon_{pjr} u^p a_i b^r - \frac{8}{|a|^4}\epsilon_{pir} u^p a_j b^r 
- \frac{2}{|a|^4}\epsilon_{pqr} u^p a^q b^r (\delta_{ij} - u_i u_j) + \frac{8}{|a|^6}\epsilon_{pqr} u^p a^q b^r a_i a_j  \\ 
&  -\frac{10}{|a|^4}\epsilon_{lij}u^lg(a,b) -\frac4{|a|^2}\epsilon_{pqj}u_iu^pa^q +\frac{16}{|a|^6}\epsilon_{pqj}a_iu^pa^qg(a,b) \\ 
& + \frac6{|a|^2}\epsilon_{pqi} u_j u^p a^q - \frac{24}{|a|^6}\epsilon_{pqi} a_j u^p a^q g(a,b) 
+ \frac{6}{|a|^4}\epsilon_{pqi} u^p a^q b_j - \frac{4}{|a|^4}\epsilon_{pqj} u^p a^q b_i.
    \end{split}
\end{equation}

We now compare and simplify two terms that appear in equation \eqref{eq_fourth_order_coeff} 
using \eqref{eq_21} and \eqref{eq_56}:
\begin{equation}\label{4th3}
    \begin{split}
\Bigl(2\frac{\p^3L}{\p x^d \p\ddot{x}^i \p\dddot{x}^j} &- 3\frac{\p^3L}{\p x^d \p\ddot{x}^j \p\dddot{x}^i}\Bigr ) u^d
- \Bigl(2\frac{\p^3L}{\p\ddot{x}^r \p\ddot{x}^i \p\dddot{x}^j} - 3\frac{\p^3L}{\p\ddot{x}^r \p\ddot{x}^j \p\dddot{x}^i}) \p_d(\Gamma_{bc}^r) u^b u^c u^d \\ 
&\hspace{-15pt} \doteq
\left(-\frac{10}{|a|^4}\epsilon_{lij} u^l a_r \p_d(\Gamma^r_{bc}) u^b u^c 
+ \frac{16}{|a|^6}\epsilon_{pqj} u^p a_i a^q a_r\p_d(\Gamma^r_{bc}) u^b u^c
- \frac4{|a|^4}\epsilon_{pqj} u^p a^q\p_d(\Gamma_{bci}) u^b u^c \right. \\
&\hspace{-15pt} - \frac4{|a|^4}\epsilon_{pqj} u^p\p_d(\Gamma_{bc}^q) u^b u^c a_i 
+ \frac4{|a|^4}\epsilon_{pqj} u_i u^p a^q u_r\p_d(\Gamma^r_{bc}) u^b u^c 
- \frac{24}{|a|^6}\epsilon_{pqi} u^p a_j a^q  a_r \p_d(\Gamma^r_{bc}) u^b u^c \\ 
&\hspace{-15pt} \left. + \frac6{|a|^4}\epsilon_{pqi} u^p a^q\p_d(\Gamma_{bcj}) u^b u^c 
+ \frac6{|a|^4}\epsilon_{pri} u^p  \p_d(\Gamma_{bc}^r) u^b u^c a_j 
- \frac6{|a|^4}\epsilon_{pqi} u_j u^p a^q u_r\p_d(\Gamma^r_{bc}) u^b u^c \right) u^d \\ 
&\hspace{-15pt} - \left(-\frac{10}{|a|^4}\epsilon_{lij} u^l a_r - \frac4{|a|^4}\epsilon_{prj} u^p a^i 
- \frac4{|a|^4}\epsilon_{pqj} u^p \delta_{ir} a^q + \frac4{|a|^4}\epsilon_{pqj} u_i u_r u^p a^q   
+ \frac{16}{|a|^6}\epsilon_{pqj} a_i u^p a^q a_r \right. \\ 
&\hspace{-15pt} \left. + \frac6{|a|^4}\epsilon_{pri} u^p a_j + \frac6{|a|^4}\epsilon_{pqi} u^p\delta_{jr} a^q  
- \frac6{|a|^4}\epsilon_{pqi}u_ju_ru^pa^q - \frac{24}{|a|^6}\epsilon_{pqi}a_ju^pa^qa_r\right)\p_d(\Gamma_{bc}^r) u^bu^cu^d\\ 
&\hspace{-15pt} = \frac6{|a|^4}\epsilon_{pqi} u^p a^q\p_d(\Gamma_{bcj}) u^b u^c u^d - \frac4{|a|^4}\epsilon_{pqj} u^p a^q  \p_d(\Gamma_{bci}) u^b u^c u^d - \frac6{|a|^4}\epsilon_{pqi} u^p a^q \p_d(\Gamma_{bcj}) u^b u^c u^d\\
&\hspace{-15pt} + \frac4{|a|^4}\epsilon_{pqj} u^p a^q\p_d(\Gamma_{bci}) u^b u^c u^d = 0. 
    \end{split}
\end{equation}

Finally, substituting \eqref{eq_14} and \eqref{4th1}-\eqref{4th3} into \eqref{eq_fourth_order_coeff} we
get the following expression for the fourth order coefficient of the Euler--Lagrange equation:
\begin{equation}
    \begin{split}
\frac{\p EL_i}{\p\!\stackrel{\rm iv}{x}{\!}^j} & \doteq
\frac2{|a|^4}\epsilon_{pjr} u^p a_i b^r - \frac8{|a|^4}\epsilon_{pir} u^p a_j b^r 
- \frac2{|a|^4}\epsilon_{pqr} u^p a^q b^r (\delta_{ij} - u_i u_j) + \frac8{|a|^6}\epsilon_{pqr} u^p a^q b^r a_i a_j  \\ 
& - \frac{10}{|a|^4}\epsilon_{lij} u^l g(a,b) + \frac{16}{|a|^6}\epsilon_{pqj} a_i u^p a^q g(a,b) 
- \frac{24}{|a|^6}\epsilon_{pqi} a_j u^p a^q g(a,b) + \frac6{|a|^4}\epsilon_{pqi} u^p a^q b_j \\
& - \frac{4}{|a|^4}\epsilon_{pqj} u^p a^q b_i +\frac5{|a|^2}\epsilon_{kij} a^k 
+ \frac1{|a|^2}\epsilon_{pqj} u^p a^q u_i + \frac1{|a|^2}\epsilon_{pqi} u^p a^q u_j.  
    \end{split}
\end{equation}

We must now now check that this coefficient vanishes for all $i,j = 1,2,3$. 
One may indeed, guess it should be so, as no terms with $\p_k\Gamma_{ij}^l$ factor appears 
(these are responsible for curvature), hence the coefficients looks precisely as in the flat case.
Let us (using isometry invariance) rotate the space so that $u = (1,0,0), a = (0,|a|,0)$ and $b = (-|a|^2, b^2, b^3)$.
Then we have:

 \begin{itemize}
\item $(i,j)=(1,1)$: $\frac{\p EL_1}{\p\!\stackrel{\rm iv}{x}{\!}^1}(x_0)=0$.
  \medskip
\item $(i,j)=(1,2)$: $\frac{\p EL_1}{\p\!\stackrel{\rm iv}{x}{\!}^2}(x_0)=0$.
  \medskip
\item $(i,j)=(1,3)$: $\frac{\p EL_1}{\p\!\stackrel{\rm iv}{x}{\!}^3}(x_0)=
	\frac6{|a|^4}\epsilon_{123} u^1 a^2 b^1+\frac5{|a|^2}\epsilon_{213} a^2 +\frac1{|a|^2}\epsilon_{123} u^1a^2
	=\frac4{|a|} - \frac5{|a|} + \frac1{|a|} = 0$.
  \medskip
\item $(i,j)=(2,1)$: $\frac{\p EL_2}{\p\!\stackrel{\rm iv}{x}{\!}^1}(x_0)=0$.
  \medskip
\item $(i,j)=(2,2)$: $\frac{\p EL_2}{\p\!\stackrel{\rm iv}{x}{\!}^2}(x_0)= -\frac6{|a|^3} b^3 -\frac2{|a|^3} b^3 +\frac8{|a|^3} b^3= 0$.
  \medskip
\item $(i,j)=(2,3)$: $\frac{\p EL_2}{\p\!\stackrel{\rm iv}{x}{\!}^3}(x_0)= -\frac2{|a|^3} b^2 +\frac{16}{|a|^3} b^2 -\frac{10}{|a|^3} b^2  -\frac4{|a|^3} b^2 = 0$.
  \medskip
\item $(i,j)=(3,1)$: $\frac{\p EL_3}{\p\!\stackrel{\rm iv}{x}{\!}^1}(x_0)= -\frac6{|a|} + \frac5{|a|} + \frac1{|a|} = 0$.
  \medskip
\item $(i,j)=(3,2)$: $\frac{\p EL_3}{\p\!\stackrel{\rm iv}{x}{\!}^2}(x_0)= \frac8{|a|^3} b^2 +\frac{10}{|a|^3} b^2 -\frac{24}{|a|^3} b^2 +\frac6{|a|^3} b^2 = 0$.
  \medskip
\item $(i,j)=(3,3)$: $\frac{\p EL_3}{\p\!\stackrel{\rm iv}{x}{\!}^3}(x_0)= -\frac2{|a|^3} b^3 +\frac6{|a|^3} b^3 -\frac4{|a|^3} b^3 = 0$.
 \end{itemize}

Since $x_0$ was arbitrary, we conclude that the Euler--Lagrange equations are of third-order.
 \end{proof}

 \begin{proof}[{\bf Proof of Theorem \ref{Thm}}]
By Lemmata \ref{ord5} and \ref{ord4} the Euler--Lagrange equation $\op{EL}=\op{EL}(L)$ for $L$ given by \eqref{LL} 
has order $\leq3$. Because of this, it can be written as 
 \begin{equation}\label{ELeq3}
\op{EL}_i=\frac{\p L}{\p x^i}-D_t\frac{\p L}{\p\dot{x}^i}
+D_t^2\frac{\p L}{\p\ddot{x}^i}-D_t^3\frac{\p L}{\p\dddot{x}^i}, 
 \end{equation}
where $D_t=\dot{x}^i\p_{x^i}+\ddot{x}^i\p_{\dot{x}^i}+\dddot{x}^i\p_{\ddot{x}^i}$ is the truncated total derivative.
The difference with \eqref{ELord3} is that $D_t$ is a vector field on the space $J^3$ of 3-jets, while
$\frac{d}{dt}$ is a vector field on the space $J^\infty$ of all jets.
 
Equation of conformal geodesics \eqref{BE0} also has order 3 with all 3-jets expressed via lower order jets.

 \begin{lemma}
Evaluation of $\op{EL}$ on \eqref{BE0} is identically zero.
 \end{lemma}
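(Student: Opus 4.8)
The plan is to substitute the conformal geodesic equation \eqref{BE0} into the order-$3$ Euler--Lagrange operator \eqref{ELeq3} and show the outcome vanishes identically. Since \eqref{BE0} expresses the third jet $b=\nabla_ua$ through lower-order jets and $\op{ord}(\op{EL})\le3$ by Lemmata~\ref{ord5}--\ref{ord4}, the substitution produces a function of the $2$-jet $(x,u,a)$ alone, which must be shown to be zero. As in Lemma~\ref{ord4}, I fix an arbitrary point $x_0$ and pass to normal coordinates, so $g_{ij}\doteq\delta_{ij}$, $\Gamma^k_{ij}\doteq0$ while the curvature $\p_l\Gamma^k_{ij}$ survives; by reparametrization invariance of the action I also impose $\ell=1$, giving $g(u,a)=0$ and $A^2=|a|^2$. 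Because the action is reparametrization invariant, $\op{EL}$ satisfies the off-shell identity $u^i\op{EL}_i\equiv0$ (the curved analogue of step~4 in the proof of Theorem~\ref{Thm2}), so only the two components of $\op{EL}$ normal to $u$ need to be checked; in this gauge the normal part \eqref{BE2} fixes $b$ up to a harmless multiple of $u$.

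Next I would organize the substituted expression $\op{EL}_i|_{\eqref{BE0}}$ into a curvature-free part and a curvature part. In normal coordinates the curvature-free part is exactly the flat Euler--Lagrange operator of the Lagrangian \eqref{LL} evaluated on the flat conformal-circle equation $b=-|a|^2u$, and this vanishes by Theorem~\ref{Thm2}. The curvature part gathers the explicit $\p_l\Gamma^k_{ij}$ terms produced when the truncated total derivatives $D_t,D_t^2,D_t^3$ differentiate the Christoffel symbols hidden inside $a^k,b^k$ and inside $\tfrac{\p L}{\p x^i}$, together with the Schouten contribution $P^\sharp u$ entering $b$ through \eqref{BE0}. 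The lemma thus amounts to the vanishing of this curvature part.

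The main obstacle is precisely this curvature cancellation. Whereas Lemma~\ref{ord4} established that at fourth order all $\p_l\Gamma$ terms drop out, at third order they persist and must be matched against the Schouten tensor $P$. I would assemble the surviving $\p_l\Gamma^k_{ij}$ contributions, use $S^k_{ijl}=\p_l\Gamma^k_{ij}$ together with $S^k_{i[jl]}=\tfrac12R^k_{lij}$ to contract them into Ricci form, and identify the resulting $\op{Ric}(u,\cdot)$-type expressions with $P^\sharp u$ after restoring the trace term (recalling $P=\tfrac1{n-2}\op{Ric}^0+\tfrac1{2n(n-1)}R\,g$); the bookkeeping is consistent with the relation $\kappa\tau=P(T,B)=\op{Ric}(T,B)$ recorded after the torsion lemma. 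Having reduced the statement to a curvature polynomial, I would finish in the manner of Lemma~\ref{ord4}: by isometry invariance rotate so that $u=(1,0,0)$, $a=(0,|a|,0)$, $b=(-|a|^2,b^2,b^3)$, and verify the two normal components of the identity componentwise, the tangential combination $u^i\op{EL}_i$ vanishing automatically.
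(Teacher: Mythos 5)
Your overall strategy is sound and runs parallel to the paper's first (non-computer) verification route: both reduce the claim to a pointwise computation in normal coordinates with the constraints $|u|^2=1$, $g(u,a)=0$, followed by a componentwise check after rotating $u,a,b$ into standard position. Your two organizing ideas --- splitting $\op{EL}_i|_{\eqref{BE0}}$ into a curvature-free piece (handled by Theorem \ref{Thm2}) plus a curvature piece, and using the off-shell identity $u^i\op{EL}_i\equiv 0$ from reparametrization invariance to reduce to the two components normal to $u$ --- are both legitimate and would genuinely shorten the bookkeeping; the paper only invokes the rank/tangency argument later, in the equivalence step of the main theorem, and instead organizes the lemma by degree in $\dddot{x}$, first proving via \eqref{eq:EL2} that $\op{EL}$ is at most quadratic in third jets and then asserting the vanishing of the degree $0,1,2$ coefficients.

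The gap is that the decisive step --- the cancellation of the curvature terms against the Schouten contribution $P^\sharp u$ --- is announced but not performed, and this is where all the content of the lemma sits. Two specific points are glossed over. First, since $\op{EL}$ is quadratic in $\dddot{x}$ and the substituted value is $\dddot{x}\doteq -|a|^2u + C$ with $C$ linear in $\p\Gamma$ and $P$, the substitution produces cross terms linear in $C$ \emph{and} terms quadratic in $C$; your reduction ``flat part vanishes by Theorem \ref{Thm2}, remainder is Ricci bookkeeping'' must account for the quadratic-in-curvature piece separately, and nothing in the sketch shows it cancels. Second, the claim that the surviving $\p_l\Gamma^k_{ij}$ contributions contract precisely into $\op{Ric}$ and then match $P^\sharp u$ with the correct trace coefficients is exactly the identity to be proved, not a consequence of the relation $\kappa\tau=\op{Ric}(T,B)$, which only records the on-shell value of the integrand. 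To be fair, the paper itself does not print this computation either (``this computation is even longer and we omit it'') and falls back on a symbolic Maple verification; but it does supply that executed verification, whereas your proposal, as written, supplies neither the hand computation nor a machine check, so the lemma remains unestablished.
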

 
 \begin{proof}
We explain two verifications of this claim. First note that $\op{EL}$ is quadratic in 3-jets. Indeed, because at most 4th derivative
of $L$ appears in \eqref{ELeq3}, it is easy to see that degree of $\op{EL}$ in $\dddot{x}$ is at most 3. 
Furthermore, from formula \eqref{ELeq3} we observe that the top degree part of $\op{EL}_k$ corresponds to
 $$
\dddot{x}^i\dddot{x}^j\frac{\p^3L}{\p\ddot{x}^i\p\ddot{x}^j\p\ddot{x}^k}-
\dddot{x}^i\dddot{x}^j\dddot{x}^l\frac{\p^4L}{\p\ddot{x}^i\p\ddot{x}^j\p\ddot{x}^l\p\dddot{x}^k}.
 $$
Actually 
 $$
\frac{\p^3\op{EL}_k}{\p\dddot{x}^i\p\dddot{x}^j\p\dddot{x}^l}=
\frac{\p^3\lambda_l}{\p\ddot{x}^i\p\ddot{x}^j\p\ddot{x}^k}-\frac{\p^3\lambda_k}{\p\ddot{x}^i\p\ddot{x}^j\p\ddot{x}^l}=0
 $$
by \eqref{eq:EL2}, where $\lambda_s$ are coefficients of the third jets of $L$ as in \eqref{eq:EL1}.
In fact, for our Lagrangian $\lambda_s=\frac{\ell}{A^2}\epsilon_{pqs}u^pa^q$ and the required property was 
demonstrated in the proof of Proposition \ref{ord5}.
 
Thus it remains to verify vanishing of $\dddot{x}$-coefficients of degree 0,1,2 in $\op{EL}$. This is done
in a way similar to the proof of Lemma \ref{ord4}; this computation is even longer and we omit it.

Alternatively, one may pass from $(x,\dot{x},\ddot{x},\dddot{x})$ coordinates to $(x,u,a,b)$ coordinates on $J^3$.
With the inverse map denoted $\phi$, the corresponding transformation of basic vector fields on $J^3$ is
 \begin{gather*}
\phi_*\p_{b^j}=\p_{\dddot{x}^j},\qquad
\phi_*\p_{a^j}=\p_{\ddot{x}^j}-3\Gamma^k_{ij}\dot{x}^i\p_{\ddot{x}^k},\\
\phi_*\p_{u^j}=\p_{\dot{x}^j}-2\Gamma_{ij}^k\dot{x}^i\p_{\ddot{x}^k}-3\Bigl(
 \Gamma_{ij}^k\ddot{x}^i+S_{(ijl)}^k\dot{x}^i\dot{x}^l-2\Gamma_{pq}^k\Gamma^p_{ij}\dot{x}^i\dot{x}^q\Bigr)
 \p_{\dddot{x}^k},\\
\phi_*\p_{x^j}=\p_{x^j}-\p_j\Gamma_{pq}^k\dot{x}^p\dot{x}^q\p_{\ddot{x}^k}-\Bigl(
 3\p_j\Gamma_{pq}^k\dot{x}^p\ddot{x}^q+\p_jS_{ipq}^k\dot{x}^i\dot{x}^p\dot{x}^q-3\p_j\Gamma_{is}^k\Gamma^s_{pq}
 \dot{x}^i\dot{x}^p\dot{x}^q\Bigr)\p_{\dddot{x}^k}.
 \end{gather*}
While the Lagrangian \eqref{LL} is simpler in $(x,u,a,b)$ coordinates, the truncated total derivative $D_t$ is more
complicated, and this calculation is of the same complexity.

Another verification is computer based. In fact, since the computation is purely symbolic and only relies on integer arithmetic,
the rigor does not suffer with computer algebra approach. We did it in the symbolic package Maple. 
To simplify the computation, we assumed that the metric $g$ is in diagonal coordinates, which is always possible 
to achieve locally in 3D. 
Also in verification of vanishing we substituted constraints $|u|^2=1$, $g(u,a)=0$, etc, as in the proof of Lemma \ref{ord4}.
Our program is accessible as an ancillary file in the arXiv version, and it shows identical cancellation of all terms upon 
substitution of \eqref{BE0} into $\op{EL}$. 
 \end{proof}
 
Thus conformal geodesic equations imply the Euler--Lagrange equations for $L$.

For the reverse direction, note that $\op{rank}(A)=2$, where $A$ is the $3\times3$ matrix $[\p\op{EL}_i/\p\dddot{x}^j]$.
We already discussed that the rank of $A$ cannot exceed 2, and it is equal to 2 for the flat metric
(and hence any metric $C^2$ close to it). In general, the required equality is obtained by a straightforward computation.

Since the rank of the equation for unparametrized conformal geodesics is also 2, these equations are equivalent.
Therefore $\op{EL}$ is the equation of unparametrized conformal geodesics, as was claimed.
 \end{proof}

Finally, let us indicate a Lagrangian of the second order that also solves the variational problem.
Similarly to the flat case, let us note that 
 $$
\lambda_s=\frac{\p L}{\p b^s}=\frac{\ell}{A^2}\epsilon_{ijs}u^ia^j 
 $$
is the gradient by $a$, i.e.\ $\lambda_s=\frac{\p\lambda}{\p a^s}$ for some $\lambda$. 
This potential has the form (note the lowered indices)
 $$
\lambda=\op{arctan}\frac{a_1\ell^2-g(u,a)u_1}{\ell\sqrt{|g|}(\epsilon_{1ij}u^ia^j)}
 $$
and, similar to \eqref{2ndordL}, passing to  
 $$
L'=L-\frac{d\lambda}{dt} 
 $$
we get a second order Lagrangian (affine in second jets) with precisely unparametrized conformal geodesics as extremals. 
This $L'$ is neither covariant nor isometry invariant.

\section{On conformal invariance}\label{S6}

Since the equation for conformal geodesics is conformally invariant, it is a natural question whether 
it is given by a conformally invariant Lagrangian. Several attempts were made to identify such.

In \cite{BE} the following Lagrangian of order 2 (nondegenerate in 2-jet variable $a$) was considered 
 $$
L=\frac{|a|^2}{|u|^2}-2\frac{(u\cdot a)^2}{|u|^4}+2P(u,u).
 $$
It is conformally invariant up to divergence, but the corresponding Euler--Lagrange equation is of order 4. 
Thus it contains more extremals than just conformal geodesics (the latter are extremals under 
an additional condition $K=0$, where $K$ is a certain conformally invariant vector, vanishing in the conformally flat case).

In \cite{DK} the Lagrangian $L=\frac{u\cdot E}{|u|^2}$ of order 3 was considered, where $E$ is equal to $b=\nabla_ua$
minus the expression in the right-hand side of \eqref{BE0} (in other words, components of the vector $E$ give the equations of 
parametrized conformal geodesics). 
This Lagrangian is conformally invariant, but again its extremals are more plentiful than just conformal geodesics;
see also \cite{SZ} for similar results. 

Our Lagrangian $L$ is covariant and even invariant with respect to isometries, but
it is not conformally invariant. However we have:
 \begin{theorem}
Lagrangian \eqref{LL} is conformally invariant up to divergence. More precisely, if $L$ is the 
Lagrangian \eqref{LL} computed for the metric $g$ and $\bar{L}$ is such for $\bar{g}=e^{-2\f}g$, then
 \begin{equation}\label{LbarL}
\bar{L}-L=\frac{d}{dt}S,\quad \text{where}\quad 
S=\op{arccos}\Bigl(\frac{g(\pi(a),\pi(\bar{a}))}{|\pi(a)|_g\cdot|\pi(\bar{a})|_g}\Bigr)
 \end{equation}
and $a=\nabla_uu$, $\bar{a}=\bar\nabla_{\bar{u}}\bar{u}$, $\bar{u}=e^\f u$ and $\pi:TM\to u^\perp$,
$\pi(v)=v-\tfrac{g(v,u)}{|u|_g^2}u$ is the orthogonal projection.
 \end{theorem}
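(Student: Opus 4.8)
The plan is to reduce the statement to a clean geometric identity about torsion, and then to read off the correction $S$ as the angle between two principal normals. By the torsion Lemma of Section~\ref{S4} we have $L\,dt=\tau\,ds$ and likewise $\bar L\,dt=\bar\tau\,d\bar s$, where $\tau,s$ (resp. $\bar\tau,\bar s$) are the torsion and arc length of $\gamma$ computed with $g$ (resp. $\bar g$). Hence \eqref{LbarL} is equivalent to the identity of $1$-forms along $\gamma$
\begin{equation*}
\bar\tau\,d\bar s-\tau\,ds=dS.
\end{equation*}
So everything comes down to comparing the two torsions, and the natural tool is the connection induced on the normal bundle of the curve.

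First I would set up the normal bundle $u^\perp$ along $\gamma$. Since orthogonality to the line $\R u$ is conformally invariant, the subbundle $u^\perp$ and the projector $\pi$ are the same for $g$ and $\bar g$. The Levi-Civita connection induces a metric connection $\nabla^\perp_T V=\pi(\nabla_T V)$ on this oriented rank-$2$ bundle, and the Frenet--Serret equations give $\nabla^\perp_T N=\tau B$, $\nabla^\perp_T B=-\tau N$ (the latter forced by $g$-compatibility and orientation). Thus $\tau$ is exactly the angular velocity of the frame $(N,B)$: writing $(N,B)$ as the rotation by an angle $\alpha(s)$ of a $\nabla^\perp$-parallel orthonormal frame $(P_1,P_2)$ of $u^\perp$ (which exists because the bundle is flat over a one-dimensional base), one gets $\tau\,ds=d\alpha$.

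Next I would compute how $\nabla^\perp$ changes under $\bar g=e^{-2\f}g$. Using the standard transformation $\bar\nabla_XY=\nabla_XY-(X\f)Y-(Y\f)X+g(X,Y)\nabla\f$ together with $g(T,V)=0$ for $V\in u^\perp$, projecting with $\pi$ annihilates the $T$-component and yields
\begin{equation*}
\bar\nabla^\perp_T V=\nabla^\perp_T V-d\f(T)\,V.
\end{equation*}
The key point is that the correction is a pure trace (scaling) term carrying no rotational ($SO(2)$) part. Consequently $\nabla^\perp$ and $\bar\nabla^\perp$ have the \emph{same} parallel directions: a direction parallel up to scale for one is parallel up to scale for the other. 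Therefore $(P_1,P_2)$ still measures rotation for $\bar g$, so $\bar\tau\,d\bar s=d\bar\alpha$, where $\bar\alpha$ is the angle of the $\bar g$-principal normal $\bar N$ relative to $(P_1,P_2)$, and $\bar\alpha=\alpha+S$ with $S$ the oriented angle from $N$ to $\bar N$ in $u^\perp$. A one-line computation $\pi(a)=|u|_g^2\kappa\,N$ and its $\bar g$-analogue $\pi(\bar a)\parallel\bar N$ identify $S$ with the angle in \eqref{LbarL}, since $\cos S=g(\pi(a),\pi(\bar a))/(|\pi(a)|_g\,|\pi(\bar a)|_g)$ and angles inside $u^\perp$ agree for $g$ and $\bar g$. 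Subtracting, $\bar\tau\,d\bar s-\tau\,ds=d(\alpha+S)-d\alpha=dS$, which is the claim.

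The main obstacle is the rigorous justification that the trace term adds no rotation, i.e. that $\nabla^\perp$ and $\bar\nabla^\perp$ share parallel directions; this is precisely what makes the correction the bare differential of the principal-normal angle, rather than an expression depending on $\f$ and its derivatives. A secondary point needing care is the bookkeeping of the reparametrization $d\bar s=e^{-\f}ds$ together with $\bar T=e^\f T$ (so that measuring rotation per position rather than per arc length removes all explicit conformal factors), and fixing the orientation of $u^\perp$ and the branch of $\op{arccos}$ so that the oriented angle $S$ is a well-defined function along $\gamma$ whose differential $dS$ matches $\bar L-L$.
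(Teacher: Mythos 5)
Your argument is correct in its essentials, but it is a genuinely different route from the paper's. The paper proceeds by brute force: it transforms $\bar a$, $\bar b$, $\bar V$, $\bar A^2$ under $\bar g=e^{-2\f}g$, writes $\bar L\,d\bar t$ explicitly as a quasilinear expression in the $2$-jet of $\f$, checks that the symbol $\sigma=\sum S_i\,d\f_i$ is closed (this closure turns out to be exactly the identity \eqref{eq_01} already established in the proof of Lemma \ref{ord5}), and then integrates to find the potential $S$, verifying \eqref{LbarL} in geodesic coordinates. You instead use the torsion Lemma to reduce everything to $\bar\tau\,d\bar s-\tau\,ds=dS$ and interpret $\tau\,ds$ as the rotation form of the Frenet normal frame relative to a $\nabla^\perp$-parallel frame in $u^\perp$; the whole content is then the observation that the conformal change of the induced normal connection, $\bar\nabla^\perp_TV=\nabla^\perp_TV-d\f(T)V$, is a pure scaling with no $\mathfrak{so}(2)$ part, so the parallel directions are shared and the two rotation forms differ by the differential of the angle from $N$ to $\bar N$, which your computation $\pi(a)=|u|_g^2\kappa N$ identifies with the $S$ of \eqref{LbarL}. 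Your approach is coordinate-free, explains \emph{why} the correction is exactly an angle, and yields the result with far less computation; what it does not give is the explicit transformation law for $V$, $A^2$ and $\bar L\,d\bar t$ that the paper records, nor the link between conformal invariance and identity \eqref{eq_01}. Two points you rightly flag deserve one more line each in a final write-up: the curve must be nondegenerate for both metrics so that $N$ and $\bar N$ (hence $S$) are defined and smooth, and the $\arccos$ in \eqref{LbarL} is the unoriented angle, so strictly one obtains $\bar\tau\,d\bar s-\tau\,ds=\pm dS$ unless an orientation of $u^\perp$ is fixed and $S$ is read as the oriented angle --- a sign ambiguity already present in the statement as written and not specific to your method.
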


This conformal invariance is equivalent to the claim that $\int L\,dt$ has identically same Euler--Lagrange equations as
its conformally related action $\int\bar{L}\,dt$. Invariance of finite torsion functional was proved by other methods 
in \cite[Proposition 5.11]{TM}. Our result contains a precise formula for the transformation of the Lagrangian, and
as a by-product the transformation of the torsion under conformal change of the metric.

 \begin{proof}
We will compare two Lagrangians \eqref{LL} for $g$ and $\bar{g}=e^{-2\f}g$. 
Recall that under conformal rescaling the Levi-Civita connection changes as follows:
 $$
\bar\nabla_XY= \nabla_XY-X(\f)Y-Y(\f)X+g(X,Y)\nabla^g\f.
 $$
Let $u=\dot{x}$ be the unit vector for $g$, tangent to $\gamma$, then the corresponding unit vector for 
$\bar{g}$ is $\bar{u}=e^\f u$.

For the acceleration $a=\nabla_uu$ we have
 $$
\bar{a}=\bar\nabla_{\bar{u}}\bar{u}= e^{2\f}\bigl(a-\p_u(\f)u+\nabla^g\f\bigr).
 $$
Note that $g(u,\bar{a})=0$ (we will use $g(u,u)=1$, $g(u,a)=0$ throughout). 
 
Next we compute $b=\nabla_ua$ and 
 \begin{align*}
\bar{b} &= \bar\nabla_{\bar{u}}\bar{a}=e^\f\bigl(\nabla_u\bar{a}-\p_u(\f)\bar{a}-\p_{\bar{a}}(\f)u\bigr)\\
&= e^{3\f}\bigl(b-\p_u(\f)^2u+\p_u(\f)\nabla^g\f-\p_a(\f)u+\nabla_u\nabla^g\f-|\nabla^g\f|_g^2u\bigr).
 \end{align*} 

From this we get the formula for the volume $V=\op{Vol}_g(u,a,b)$ using its skew-symmetry:
 \begin{align*}
\bar{V} &= \op{Vol}_{\bar{g}}(\bar{u},\bar{a},\bar{b})=
e^{3\f}\op{Vol}_g(u,a+\nabla^g\f,b+\p_u(\f)\nabla^g\f+\nabla_u\nabla^g\f)\\
&= e^{3\f}\bigl(V+\op{Vol}_g(u,\nabla^g\f,b)+\p_u(\f)\op{Vol}_g(u,a,\nabla^g\f)
+\op{Vol}_g(u,a,\nabla_u\nabla^g\f)+\op{Vol}_g(u,\nabla^g\f,\nabla_u\nabla^g\f)\bigr),
 \end{align*} 
and for the area square $A^2=g(a,a)$ we get the following:
 $$
\bar{A}^2=\bar{g}(\bar{a},\bar{a})=e^{2\f}\bigl(A^2+2\p_a(\f)-\p_u(\f)^2+|\nabla^g\f|_g^2\bigr).
 $$
Thus (note that $d\bar{t}=e^{-\f}dt$) we obtain:
 $$
\bar{L}\,d\bar{t}=\frac{V\bigl(1+\p_a(\f)+\p_u(\f)\p_b(\f)\bigr)+\op{Vol}_g(u,a+\nabla^g\f,\nabla_u\nabla^g\f)}
{A^2+2\p_a(\f)-\p_u(\f)^2+|\nabla^g\f|_g^2}\,dt.
 $$
This expression is quasilinear in the second jets of $\f$ and 
from the symbol in $\f$ we conclude that $L$ is not conformally invariant.

To see that $\bar{L}-L$ is a divergence, one may first concentrate on the symbol (top derivatives) in $\f$.
It may be identified with 1-form $\sigma=\sum_{i=1}^3S_id\f_i$, where $\nabla^g\f=(\f_i)$. 
Checking compatibility $d\sigma=0$ is equivalent to identity \eqref{eq_01} estabilished in the proof of Lemma \ref{ord5}.
This allows to obtain the potential $S$ via $S_i=S_{\f_i}$.

In order to verify \eqref{LbarL} we choose geodesic coordinates for $g$ at the point $x_0\in\gamma\subset M$, where all
expressions simplify and the identiy follows by a straightforward computation.
 \end{proof}

\bigskip

Denoting by $\measuredangle(v,w)=\op{arccos}\Bigl(\frac{g(v,w)}{|v|_g\cdot|w|_g}\Bigr)$ the angle between vectors $v$ and $w$,
we derive from \eqref{LbarL} the following relation for any vector $v$ (in fact only the direction of $v$ is important in this formula):
 \begin{equation}\label{LbarL2}
\bar{L}-\frac{d}{dt}\measuredangle(\pi(\bar{a}),v)=L-\frac{d}{dt}\measuredangle(\pi(a),v).
 \end{equation}
 
The theory of global differential invariants \cite{KL} implies the existence of conformally invariant vector
field $\xi$ depending on 4-jet of the curve, complementary to the direction of $u$.
A particular form of $\xi$ is not important here and so is omitted, for instance we refer to the conformal Frenet frames in \cite{F}. 

With such a choice, due to \eqref{LbarL2} the following Lagrangian has the same extremals as $L$ and 
(the corresponding form $L\,dt$) is fully conformally invariant:
\begin{equation}\label{LbarL3}
\hat{L}=L-\frac{d}{dt}\measuredangle(\pi(a),\xi).
 \end{equation}
Reference \cite{M} operates with similar ideas, but \cite{MMR} questions its ``magical identity involving a globally defined angle''.
We interpret this angle as our $\measuredangle(\cdot,\xi)$.

The conformally invariant Lagrangian form given by \eqref{LbarL3} 
was already discovered in the conformally flat case as the conformal torsion $T$
times the conformal arc length element $\omega$, see \cite{M,CSW} (in other sources, $T$ is denoted by
$k_2$ and $\mu_2$). The corresponding functional (taken with values in $S^1=\R\,\op{mod}2\pi$)
 \[
\int T\,\omega=\int \hat{L}\,dt=\int L\,dt=\int \tau\,ds
 \]
is known as the total twist or the Banchoff--White functional (its conformal invariance was proven in \cite{BW}). 
The expressions for the conformal torsion $T$ and the arc length element $\omega$ through 
the metric curvature $\kappa$, the torsion $\tau$ and the natural parameter $ds$ 
of a curve in the Euclidean space $\R^3$ are the following:
 \[
T=\Bigl(2\kappa_s^2\tau + \kappa\,\kappa_s\tau_s - \kappa\,\kappa_{ss}\tau + \kappa^2\tau^3\Bigr)\Bigl/
\sqrt[4]{(\kappa_s^2+\kappa^2\tau^2)^5},\quad
\omega=\sqrt[4]{\kappa_s^2+\kappa^2\tau^2}\,ds.
 \] 

Thus we conclude that raising the order of Lagrangian to 4
yields a possibility to choose a conformally invariant Lagrangian $\hat{L}$ for conformal geodesics.
Apparently such a Lagrangian $L$ cannot exist in order 3. 

 \begin{rk}
The nomenclature ``conformal geodesics'' was used differently in \cite{M,MMR}, namely as the Euler--Lagrange equation
for the conformal arc length $\int\omega$. This is a sixth order equation, and it is variational by definition.
We however keep using the terminolgy adapted in \cite{Y,FS,BE}.
 \end{rk}

Let us conclude by mentioning that it may be interesting to see if the invariant variational bicomplex technique \cite{G,An} 
would work for the invariant variational problem $\int\hat{L}dt=\int T\,\omega$. 
This question will be addressed in the forthcoming paper \cite{KSS}.

\bigskip

{\bf Acknowledgment.}
The research of B.K.\ and W.S.\ was partially supported by
the Tromsø Research Foundation (project “Pure Mathematics in Norway”) and
the UiT Aurora project MASCOT.
V.M. thanks the DFG (projects 455806247 and 529233771), and the
ARC Discovery Programme DP210100951 for their support.
B.K.\ and V.M.\ also acknowledge DAAD-RCN collaboration (PPP  57525061) between Germany and Norway, 
during which they first discussed the problem of focus in this work.
We thank K.\ Neusser, J.~Silhan and V.\ Zadnik for useful discussions.

\end{document}